\newtheorem{theorem}{Theorem}[section]
\newtheorem{proposition}[theorem]{Proposition}
\newtheorem{lemma}[theorem]{Lemma}
\newtheorem{maintheorem}{Theorem}
\newcommand{\cmt}{\begin{maintheorem}}
\newcommand{\fmt}{\end{maintheorem}}
\newtheorem{maincorollary}[maintheorem]{Corollary}
\newcommand{\cmc}{\begin{maincorollary}}
\newcommand{\fmc}{\end{maincorollary}}
\theoremstyle{remark}
\newtheorem{definition}[theorem]{Definition}
\newtheorem{remark}[theorem]{Remark}
\newtheorem{corollary}[theorem]{Corollary}
\def\cv{\ensuremath{\text {Cor}}}
\def\ld{\ensuremath{\text{LD}}}
\def\dist{\ensuremath{\text{dist}}}
\def\const{\ensuremath{\text{Const}}}
\def\d{\ensuremath{\text{d}}}
\def\vap{\varphi}
\begin{document}

\title[Statistical stability for Rovella maps]{Statistical stability and limit laws for Rovella maps}

\author{Jos\'e F. Alves}
\address{Jos\'e F. Alves\\ Departamento de Matemática, Faculdade de Ciências da Universidade do Porto\\
Rua do Campo Alegre 687, 4169-007 Porto, Portugal}
\email{jfalves@fc.up.pt} \urladdr{http://www.fc.up.pt/cmup/jfalves}

\author{Mohammad Soufi}
\address{Mohammad Soufi\\ Departamento de Matemática, Faculdade de Ciências da Universidade do Porto\\
Rua do Campo Alegre 687, 4169-007 Porto, Portugal}
\email{msoufin@gmail.com}
%\urladdr{http://www.fc.up.pt/pessoas/jmfreita}

%\date{\today}

\thanks{The authors were partially supported by Funda\c c\~ao Calouste Gulbenkian, by the European Regional Development Fund through the programme COMPETE and by  FCT under the projects PEst-C/MAT/UI0144/2011 and PTDC/MAT/099493/2008.}

\subjclass[2000]{37A35, 37C40, 37C75, 37D25, 37E05}

\keywords{Rovella parameters, SRB measures, entropy, non-uniform expansion, slow recurrence, decay of correlations, large deviations, Central Limit Theorem}

\begin{abstract}
We consider the family of one-dimensional maps arising from the contracting Lorenz attractors studied by Rovella.  Benedicks-Carleson techniques were used in \cite{rovella}  to prove that there is a one-parameter family of maps whose derivatives along their critical orbits increase exponentially fast and the critical orbits have slow recurrent to the critical point. Metzger proved in \cite{metzger} that these maps have a unique absolutely continuous ergodic invariant probability measure (SRB measure). 

Here we use the technique developed by Freitas in~\cite{freitas05,freitas} and show that the tail set (the set of points which at a given time have not achieved either the exponential growth of derivative or the slow recurrence) decays exponentially fast as time passes. As a consequence, we obtain the continuous variation (in the $L^1$-norm) of the densities of the SRB measures and associated metric entropies with the parameter.   Our main result also implies some statistical properties for these maps.
\end{abstract}

\maketitle

%\tableofcontents
%*******************************************************************************************************************************

\section{Introduction}
The Theory of Dynamical Systems studies processes which evolve in time, and  whose evolution  is given by a flow or iterations of a given map. The main goals of this theory are to describe the typical behavior of orbits as time goes to infinity and to understand how this behavior changes when we perturb the system or to which extent it is stable.

The contributions of Kolmogorov, Sinai, Ruelle, Bowen, Oseledets, Pesin, Katok, Ma\~n\'e and many others turned the attention of the focus on the study of a dynamical system from a topological perspective to a more statistical approach and Ergodic Theory experienced an unprecedented development. In this approach, one tries in particular to describe the average time spent by typical orbits in different regions of the phase space. According to the Birkhoff's Ergodic Theorem, such averages are well defined for almost all points, with respect to any invariant probability measure. However, the notion of typical orbit is usually meant in the sense of volume (Lebesgue measure), which may not be invariant.

It is a fundamental open problem to understand under which conditions the behavior of typical points is well defined, from the statistical point of view. This problem can be precisely formulated by means of \emph{Sinai-Ruelle-Bowen (SRB) measures} which were introduced by Sinai %\cite{}
for Anosov diffeomorphisms and later extended by Ruelle and Bowen for Axiom A diffeomorphisms and flows. 
Here we consider discrete time systems given by a map  $f:I\to I$ on an interval $I\subset \mathbb R$. Given an $f$-invariant Borelian probability $\mu$ in $I$, we call \textit{basin} of $\mu$ the set $B(\mu)$ of points $x\in I$ such that
\begin{equation}\label{chapequ}
\lim\limits_{n\to +\infty}\frac 1n\sum\limits_{j=0}^{n-1}\varphi(f^j(x))=\int\varphi~d\mu,\quad\mbox{for~any~continuous~}\varphi:I\to\mathbb{R}.
\end{equation}
We say that $\mu$ is an \textit{SRB measure} for $f$ if the basin $B(\mu)$ has positive Lebesgue measure in $I$. 

Trying to capture the persistence of the statistical properties of a dynamical system, Alves and Viana in \cite{alves4} proposed the notion of \textit{statistical stability}, which expresses the continuous variation of SRB measures as a function of the dynamical system in a given family $\mathcal F$ of maps endowed with some topology. Assume that each one of these maps in $\mathcal F$ has a unique SRB measure.
We say that $f$ is  \textit{statistically stable}  if  the map associating to each $g\in\mathcal F$ its SRB measure $\mu_g$ is continuous at $f$. Regarding the continuity in the space, we may consider weak* topology or even strong topology given by the $L^1$-norm in the space of densities (if they exist) with respect to Lebesgue measure.

Based on the work \cite{alves4},    sufficient conditions for the strong statistical stability of non-uniformly expanding maps were given in  in \cite{alves1}. The conditions have to do with the volume decay of the tail set, which is the set of points that resist satisfying either a non-uniformly expanding requirement or a slow recurrence, up to a given time. Freitas proved in \cite{freitas05, freitas} that the Benedicks-Carleson quadratic maps are non-uniformly expanding and slowly recurrent to the critical set, and the volume of their tail sets decays exponentially fast, so that the results in \cite{alves1} apply. Thus, these maps are statistically stable in the strong sense. For this purpose Freitas elaborated on the Benedicks-Carleson techniques in the phase space setting.

\subsection{Contracting Lorenz attractor}

The  geometric Lorenz attractor is the first example of robust attractor for a flow containing a hyperbolic singularity \cite{williams}. The singularity is accumulated by regular orbits which prevent the attractor to be hyperbolic. Lorenz attractor is a transitive maximal invariant set for a flow in $3$-dimensional spaces induced by a vector field having singularity at origin which the derivative of vector field at singularity has real eigenvalues $\lambda_2<\lambda_3<0<\lambda_1$ with $\lambda_1+\lambda_3>0$. 

%Rovella introduced in \cite{rovella} a new kind of attractor, based in previous work by Arneodo, Coullet and Tresser \cite{arneodo}.
%
%% *********** Figure ****************************************************
%\begin{figure}[!ht]
%\begin{center}
%\includegraphics[height=7cm]{attractorrovella.ps}
%\vspace{-10mm}
%\caption{Rovella attractor}
%\end{center}
%\end{figure}
%% ***********************************************************************
%
The construction of the flow containing this attractor is the same as the geometric Lorenz flow. The original smooth vector field $X_0$ in $\mathbb{R}^3$ has the following  properties:
\begin{itemize}
\item[A1)] $X_0$ has a singularity at 0  for which the eigenvalues $\lambda_1,\lambda_2,\lambda_3\in \mathbb R$ of $DX_0(0)$ satisfy: 
\begin{itemize}
\item[i)]  $0<\lambda_1<-\lambda_3<-\lambda_2$,
\item[ii)]  $r>s+3$, where $r=-\frac{\lambda_2}{\lambda_1}$, $s=-\frac{\lambda_3}{\lambda_1}$;
\end{itemize}
\item[A2)]  there is an open set $U\subset \mathbb{R}^3$, which is positively invariant under the flow, containing the the cube $\{(x,y,z): |x|\leq 1, |y|\leq 1, |z|\leq 1\}$. The top of the cube is a Poincar\'e section foliated by stable lines $\{x=\const\}\cap\Sigma$ which are invariant under Poincar\'e first return map $P_0$. The invariance of this foliation uniquely defines a one dimensional map $f_0:I\setminus\{0\}\to I$ for which
$$f_0\circ \pi=\pi\circ P_0,$$
where $I$ is the interval $[-1,1]$ and $\pi$ is the canonical projection $(x,y,z)\mapsto x$; 
\item[A3)]  there is a small number $\rho>0$ such that the contraction along the invariant foliation of lines $x=\const$ in $U$ is stronger than $\rho$.
\end{itemize}
Observe that Rovella replaced the usual expanding condition $\lambda_3+\lambda_1>0$ in the hyperbolic singularity of the Lorenz flow  by the contracting condition $\lambda_3+\lambda_1<0$. The one-dimensional map $f_0$ satisfies the following properties:
\begin{enumerate}
\item[B1)] $f_0$ has a discontinuity at $x=0$ and 
$$\lim_{x\to 0^+}f_0(x)=-1,\quad %\text{and}\quad
 \lim_{x\to 0^-}f_0(x)=1;$$
\item[B2)]  $f'_0(x)>0$ for all $x\neq 0$ with $\sup\limits_{x\in(0,1]}f'_0(x)=f'_0(1), %\text{and}\quad 
\sup\limits_{\{x<0\}}f'_0(x)=f'_0(-1)$ and $$\lim\limits_{x\to 0}\frac{f_0'(x)}{|x|^{s-1}}\mbox{ exists and it is nonzero};$$ 
\item[B3)]   $\pm 1$ are \emph{pre-periodic and repelling}: there exist $k_1, k_2, n_1, n_2$ such that
$$f_0^{n_1+k_1}(1)=f_0^{k_1}(1), \quad (f_0^{n_1})'(f_0^{k_1}(1))>1;$$
$$f_0^{n_2+k_2}(-1)=f_0^{k_2}(-1),  \quad (f_0^{n_2})'(f_0^{k_2}(-1))>1;$$
\item[B4)]  $f_0$ has \emph{negative Schwarzian derivative}: there is $\alpha<0$ such that in $I \setminus\{0\}$
$$S(f_0):=\bigg(\frac{f''_0}{f'_0}\bigg)'-\frac 12 \bigg(\frac{f''_0}{f'_0}\bigg)^2<\alpha.$$
\end{enumerate}
% *********** Figure ****************************************************
\begin{figure}[!ht]
\begin{center}
%\hspace*{2cm}
\includegraphics[height=5cm]{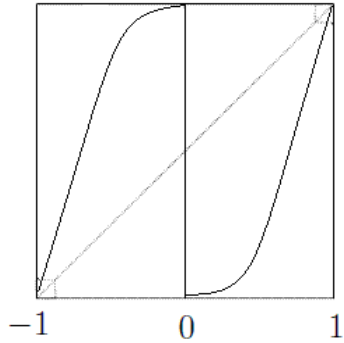}
%\vspace{-30mm}
\caption{Rovella map}\label{fig:4}
\end{center}
\end{figure}
%***********************************************************************

\subsection{Rovella parameters}

The above attractor is not robust. However,  Rovella proved that it is persists in a measure theoretical sense: there exists a one parameter family of positive Lebesgue measure of $C^3$ close vector fields to the original one which have a transitive non-hyperbolic attractor.
In the proof of that result, Rovella showed that there is a set of parameters $E\subset (0,1)$  with 0 as a  full density point of $E$, i.e.
      $$\lim\limits_{a\to 0}\frac{|E\cap(0,a)|}{a}=1,$$
such that:
\begin{itemize}
\item[C1)]  there is $K_1,K_2>0$ such that for all $a\in E$ and $x\in I$ 
$$K_2|x|^{s-1}\le f'_a(x)\le K_1|x|^{s-1},$$ 
where $s=s(a)$. To simplify, we shall assume $s$ fixed, as in \cite{rovella};
\item[C2)]  there is $\lambda_c >1$ such that for all $a\in E$, the points $1$ and $-1$ have \emph{Lyapunov exponents} greater than $\lambda_c$:
\begin{equation*}
(f_a^n)'({\pm}1)>\lambda_c^n,\quad\text{for all $n\geq 0$;}%\tag{EG}
\end{equation*}

 \item [C3)] there is $\alpha>0$ such that   for all $a\in E$  the \emph{basic assumption} holds: 
 \begin{equation}
|f^{n-1}({\pm}1)|>e^{-\alpha n},\quad\text{for all $n\geq 1$;}\tag{BA}
\end{equation}
 \item[C4)]  the forward orbits of the points ${\pm}1$ under $f_a$ are dense in $[-1, 1]$ for all $a\in E$.
\end{itemize}
Metzger  used the conditions C1)-C3) in \cite{metzger}  to prove the existence of an ergodic absolutely continuous invariant probability measure for Rovella parameters. 
In order to obtain uniqueness of that measure, Metzger needed to consider a slightly smaller class of parameters (still with full density at 0), for which conditions C2) and C3) imply a strong mixing property. 

As a corollary of our main theorem, we shall deduce here the uniqueness of the ergodic absolutely continuous invariant probability measure for a smaller set %the whole set 
of Rovella parameters with full density at 0 which we still denote it by $E$.
Hence, for each $a\in E$, the map $f_a$ has a \emph{Sinai-Ruelle-Bowen measure (SRB measure)} $\mu_a$:  there exists a positive Lebesgue measure set of points $x\in I$ such that
$$\lim\limits_{n\to \infty}\frac 1n \sum\limits_{i=1}^{n}{\varphi(f_a^i(x))}=\int{\varphi~d\mu_a},\quad\mbox{for every }\varphi \in C^0(I).$$
One of the goals of this work is to prove  the statistical stability of this one parameter family of maps, that is, to show that the SRB measure depends continuously (in the $L^1$ norm in the space of densities) on  the parameter $a\in E$. We shall also obtain some statistical laws for these measures.

\subsection{Statement of results}

To prove our main theorem we will use the result in   \cite{alves1},  where sufficient conditions for the statistical stability of non-uniformly expanding maps with slow recurrence to the critical set are given. Observe that the non degeneracy conditions on the critical set needed in \cite{alves1}  clearly holds in our case by the condition  C1).

\begin{definition}
We say that $f_a$ is \emph{non-uniformly expanding} if there is a $c>0$ such that for Lebesgue almost every $x\in I$

\begin{equation}\label{eq:1}
\liminf\limits_{n\to \infty}\frac 1n\sum\limits_{i=0}^{n-1}\log(f_a'(f_a^i(x)))>c.
\end{equation}
\end{definition}

\begin{definition}
We say that $f_a$ has \emph{slow recurrence to the critical set} if for every $\epsilon>0$ there exists $\delta>0$ such that for Lebesgue almost every $x\in I$

\begin{equation}\label{eq:2}
\limsup\frac 1n \sum\limits_{i=0}^{n-1}-\log \d_{\delta}(f_a^i(x),0)\leq \epsilon,
\end{equation}
where $\d_\delta$ is the $\delta$-truncated distance, defined as

$$\d_{\delta}(x,y)=\left\{
                       \begin{array}{lr}
                             |x-y|,& \mbox{if}\quad|x-y|\leq\delta,\\
                             
                             1,&     \mbox{if}\quad|x-y|>\delta.
                       \end{array}
                 \right.$$
\end{definition}

We define the expansion time function %\cite{alves2}
$$\mathcal{E}_a(x)=\min\left\{ N\geq 1 : \frac 1n\sum\limits_{i=0}^{n-1}\log f_a'(f_a^i(x))>c, \forall n\geq N\right\},$$
which is defined and finite almost every where in $I$, provided  (\ref{eq:1}) holds almost everywhere. Fixing $\epsilon>0$ and choosing $\delta>0$ conveniently, we define the recurrence time function %\cite{alves2}
$$\mathcal{R}_a(x)=\min\left\{ N\geq : \frac 1n\sum\limits_{i=0}^{n-1}-\log \d_{\delta}(f_a^i(x),0)<\epsilon,\forall n\geq N\right\},$$
which is defined and finite almost every where in $I$, as long as (\ref{eq:2}) holds almost everywhere. Now the tail set at time $n$ is the set of points which at time $n$ have not yet achieved either the uniform exponential growth of the derivative or the uniform slow recurrence:
$$\Gamma_a^n=\{x\in I: \mathcal{E}_a(x)>n\,\mbox{ or }\,\mathcal{R}_a(x)>n\}.$$

\begin{remark}
As observed in \cite[Remark 3.8]{alves1}, the slow recurrence condition is not needed    in all its strength: it is enough that \eqref{eq:2} holds for some sufficiently small $\epsilon>0$ and conveniently chosen $\delta>0$ only depending on the order $s$ and $c$. For this reason we may drop the dependence of the tail set on $\epsilon$ and $\delta$ in the notation. Moreover, the constants $c$ in \eqref{eq:1} and $\epsilon, \delta$ in \eqref{eq:2} % and $\lambda, \alpha$ from (EG) and (BA) 
can be chosen uniformly on the set of parameters $E$.
\end{remark}

For the maps considered by Rovella, first we claim that (\ref{eq:1}) holds almost everywhere and the Lebesgue measure of the set of points whose derivative has not achieved a satisfactory exponential growth at time $n$, decays exponentially fast as $n$ goes to infinity. Second, we claim that (\ref{eq:2}) also holds almost every where and the volume of the set of points that at time $n$, have been too close to the critical point, in mean, decays exponentially with $n$.

\begin{maintheorem}\label{maintheorem}
Each $f_a$, with $a\in E$, is non-uniformly expanding and has slow recurrence to the critical set.  Moreover, there are  $C>0$ and $\tau>0$ such that for all $a\in E$ and $n\in \mathbb{N}$,
$$|\Gamma^n_a|\leq Ce^{-\tau n}.$$
\end{maintheorem}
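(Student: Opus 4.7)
The plan is to adapt the phase-space Benedicks--Carleson scheme developed by Freitas in \cite{freitas05,freitas} for unimodal quadratic maps to the Rovella family, where the critical point is replaced by the discontinuity at $0$ with one-sided limits $\pm 1$ and with derivative behavior $f_a'(x) \asymp |x|^{s-1}$. First I would fix a small $\delta_0>0$ and cover the punctured neighborhood $(-\delta_0,\delta_0)\setminus\{0\}$ by a partition into intervals $I_r^{\pm}$, $r\geq\log(1/\delta_0)$, with $I_r^{+} = (e^{-(r+1)},e^{-r}]$ and $I_r^{-}=-I_r^{+}$, and refine each $I_r^\pm$ into $r^2$ equal subintervals $I_{r,j}^\pm$. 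This combinatorial partition records the ``depth of return'' to the critical set and will be the basis for the itinerary analysis.

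Next I would establish the \emph{binding period lemma}: if an iterate $f_a^n(x)$ falls into $I_r^\pm$, then there is a binding period $p=p(r)$, with $p\lesssim r$, during which the orbit $f_a^{n+i}(x)$ shadows the orbit of $\pm 1$ and, by exploiting the exponential growth $(f_a^i)'(\pm 1)>\lambda_c^i$ from C2 together with the basic assumption C3 on the depth $|f_a^{i-1}(\pm 1)|>e^{-\alpha i}$, the composite derivative $(f_a^{p+1})'(f_a^n(x))$ exceeds $e^{\beta p}$ for some uniform $\beta>0$ depending only on $\lambda_c$, $s$ and $\alpha$. The non-degeneracy of the critical point comes from C1, which gives $K_2|x|^{s-1}\leq f_a'(x)\leq K_1|x|^{s-1}$, and the negative Schwarzian B4 provides the distortion control needed to transfer derivative estimates from $\pm 1$ to nearby iterates. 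All constants here must be chosen independently of $a\in E$; this is possible because C1--C3 hold with uniform constants on $E$.

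Having the binding structure, I would code each point by its sequence of \emph{essential returns} with depths $(r_1,r_2,\ldots)$, and partition the tail set $\Gamma_a^n$ into cylinders according to admissible itineraries up to time $n$. A standard counting argument, together with bounded distortion on each cylinder and the derivative recovery $e^{\beta p(r)}$, bounds the Lebesgue measure of each cylinder by $C\exp(-\beta\sum r_i)$ times a purely combinatorial factor coming from the number of ways to distribute returns of total depth $R=\sum r_i$ among $n$ iterates. The point $x$ lies in $\Gamma_a^n$ only if either its Birkhoff average of $\log f_a'$ has not stabilized above $c$ or its truncated distance to $0$ has not stabilized below $\epsilon$; both conditions force $R \gtrsim \eta n$ for some $\eta>0$. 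Summing over itineraries with $R\geq \eta n$ and applying a large-deviations estimate on the combinatorial factor yields $|\Gamma_a^n|\leq Ce^{-\tau n}$ with $C,\tau$ independent of $a\in E$, and in particular \eqref{eq:1}--\eqref{eq:2} hold almost everywhere by Borel--Cantelli.

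The main obstacle I expect is the passage from Freitas's unimodal setting to the Rovella discontinuity. Two difficulties stand out: first, the image of a small interval straddling $0$ wraps around the whole of $I$, so the partition scheme and the inductive construction of admissible itineraries must be carried out separately on $\{x>0\}$ and $\{x<0\}$ and then glued using the pre-periodic repelling property B3 of the endpoints $\pm 1$; second, because $f_a'$ blows up or vanishes at $0$ depending on $s$, the ``loss of derivative'' at an essential return of depth $r$ is now of order $e^{-(s-1)r}$ rather than $e^{-2r}$ as in the quadratic case, and the binding length $p(r)$ must be tuned so that $\beta p(r) - (s-1)r$ is still uniformly positive. Controlling distortion inside the binding period, in the presence of an unbounded derivative when $s<1$, is the most delicate point, but the negative Schwarzian condition B4 together with the $|x|^{s-1}$ asymptotics of C1 should suffice to carry through the classical Koebe-type estimates.
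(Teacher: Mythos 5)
Your overall strategy coincides with the paper's: the same partition $I_{m,j}$ of the neighborhoods of $0$, the same binding-period lemma built on C1--C3, the same itinerary/cylinder decomposition with bounded distortion, and the same large-deviations count over essential-return depths. But there is one genuine gap in the reduction from the tail set to the essential returns. The slow-recurrence quantity $\frac1n\sum_{j<n}-\log \d_\delta(f_a^j(x),0)$ receives contributions from \emph{all} returns --- essential, inessential, and bound (returns occurring inside a bound period, which can nest) --- whereas your cylinders are indexed only by essential-return depths. The implication ``recurrence fails at time $n$ $\Rightarrow$ $\sum_i r_i\gtrsim \eta n$ for the essential depths'' therefore requires a separate lemma: the total depth of all inessential and bound returns between two consecutive essential returns is bounded by a constant multiple of the depth of the preceding essential return. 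For inessential returns this follows from the growth of $|f^n(\omega)|$ between returns, but for the nested bound returns the paper has to invoke the \emph{free period assumption} on the Rovella parameters (the orbits of $\pm1$ spend at most a fraction $\epsilon$ of their time in bound periods), which is an input beyond C1--C4 that your plan never mentions and cannot be derived from the conditions you list. Without this, the geometric summation over levels of nested bindings does not close and the recurrence half of the theorem is unproved. Relatedly, for the expansion half you should note that a \emph{complete} return-plus-bound-period block already restores the derivative (indeed contributes a factor $\ge e^{c'|m|}$), so failure of $(f_a^n)'(x)\ge e^{cn}$ can only be caused by a single return of depth $\gtrsim\alpha n$ whose bound period is still open at time $n$; that is the mechanism behind $R\gtrsim\eta n$, not an accumulation of shallow returns.

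Secondarily, your list of anticipated obstacles misreads the setting in ways that would misdirect the write-up. For Rovella maps $s=-\lambda_3/\lambda_1>1$ always, so $f_a'(x)\asymp|x|^{s-1}\to0$ at the discontinuity; the case $s<1$ with unbounded derivative, which you single out as the most delicate point, does not occur. An interval straddling $0$ does not ``wrap around the whole of $I$'': its two halves map to short intervals adjacent to $-1$ and to $+1$ respectively, of length comparable to $\mathfrak D(\omega)^s$; the construction simply removes $f^{-n}(0)$ and treats the two sides as separate partition elements, binding each to the orbit of the corresponding one-sided limit $\mp1$. Property B3 enters the construction of the parameter set $E$, not the tail estimate, so no ``gluing via B3'' is needed.
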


To prove this result we shall use the technique  implemented by Freitas in \cite{freitas05, freitas}. Several interesting consequences will be deduced from this main result. In particular, the uniqueness of the ergodic absolutely continuous invariant probability measure.% for the whole set of Rovella parameters.

\begin{maincorollary}
For all $a\in E$, $f_a$ has a unique ergodic absolutely continuous invariant probability measure $\mu_a$.
\end{maincorollary}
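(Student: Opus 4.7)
The plan is to deduce the corollary from the Main Theorem by invoking general results on non-uniformly expanding interval maps together with the strong mixing properties of Rovella parameters. The Main Theorem shows that $f_a$ is non-uniformly expanding and slowly recurrent to the critical set, with $|\Gamma^n_a|\le C e^{-\tau n}$. By the results in \cite{alves1} (building on the Alves--Bonatti--Viana structure theorem for non-uniformly expanding maps), these hypotheses guarantee the existence of finitely many ergodic absolutely continuous invariant probability measures $\mu_a^{1},\dots,\mu_a^{k}$ whose basins together cover $I$ up to a Lebesgue null set. Each density $d\mu_a^{i}/dm$ is bounded, and the support of $\mu_a^{i}$ is a finite disjoint union of closed intervals which is cyclically permuted by $f_a$.

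Next I would show that the support of every ergodic $\mu_a^{i}$ is the whole interval $I=[-1,1]$. At a hyperbolic time $n$ for a point $x$ in $\operatorname{supp}(\mu_a^{i})$, a neighbourhood of $x$ is mapped diffeomorphically onto an interval of uniform size with bounded distortion; these hyperbolic times occur with positive density, as a consequence of the non-uniform expansion and slow recurrence encoded in the Main Theorem. Iterating this blow-up procedure, and then invoking condition C4 (the forward orbits of $\pm1$ are dense in $I$), each connected component of $\operatorname{supp}(\mu_a^{i})$ must contain iterates of neighbourhoods arbitrarily close to the critical endpoints and, by further expansion, spread to cover $I$.

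Finally, on the refined parameter set $E$, conditions C2) and C3) imply the strong mixing property used by Metzger \cite{metzger}; in particular $f_a$ is topologically mixing on $I$. Since the supports of distinct ergodic a.c.i.p. measures are mutually singular but each equals $I$, there can be only one such measure; equivalently, a non-trivial cyclic permutation of full-interval components is incompatible with topological mixing, forcing $k=1$ and trivial cycle length. The main obstacle is the second step: showing that every ergodic a.c.i.p.\ measure is fully supported on $I$. Once this is in place, uniqueness is essentially formal from the mixing property, and the corollary follows.
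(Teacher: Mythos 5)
Your proposal has two genuine gaps. First, the step you yourself flag as the main obstacle --- that every ergodic a.c.i.p.\ is fully supported on all of $I$ --- is asserted via a ``blow-up at hyperbolic times'' sketch but never actually carried out, and it is stronger than what is needed. The paper sidesteps it entirely by quoting \cite[Lemma~5.6]{bonatti}: for a non-uniformly expanding map, \emph{every} forward invariant set of positive Lebesgue measure has full Lebesgue measure in some disk of a \emph{fixed} radius (independent of the set). Applying this to the basins of two putative ergodic a.c.i.p.'s and combining it with C4) (density of the forward orbits of $\pm1$) produces a common point in both basins, whence the two measures agree on continuous functions by Birkhoff averaging and coincide by the Riesz Representation Theorem. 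No full-support statement, no bounded densities (which, incidentally, you should not expect here: the densities of these measures typically have singularities along the critical orbits), and no structure of the supports as cyclically permuted intervals are required.

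Second, your closing appeal to topological mixing is not available on the set $E$ as the paper uses it. The paper explicitly points out that the strong mixing property derived from C2) and C3) is what \emph{Metzger} needed, and that it holds only after passing to a further restricted parameter set; the whole point of Corollary~B is to obtain uniqueness on $E$ (restricted only by the free-period assumption) \emph{without} invoking that mixing property, replacing it by C4). As written, your argument either silently shrinks $E$ again (reproving Metzger's result rather than the paper's) or rests on a mixing property you have not established. The fix is to drop both the full-support step and the mixing step and argue directly with \cite[Lemma~5.6]{bonatti} and C4) as above.
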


This follows from \cite[Lemma~5.6]{bonatti}. Actually, this lemma says that for a non-uniform expanding map, each forward invariant set with positive Lebesgue measure must have full Lebesgue measure in a disk of a fixed radius (not depending on the set). Applying this to the supports of two possible ergodic absolutely continuous invariant measures, together with the existence of dense orbits given by C4), we see that there is at least a common  point in the basins of both measures. Hence, by Riesz Representation Theorem, these measures must coincide.

As an immediate consequence of Theorem~\ref{maintheorem}, \cite[Theorem~A]{alves1} and \cite[Theorem~B]{alves5} we have the strong statistical stability for the family of Rovella one-dimensional maps and the continuous variation of metric entropy.

\begin{maincorollary}
The function $E\ni a\mapsto d\mu_a/dm$ is continuous, if the $L^1$-norm is considered in the space of densities, and the entropy of $\mu_a$ varies continuously with $a\in E$.
\end{maincorollary}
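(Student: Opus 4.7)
The plan is to combine the Main Theorem of this paper with two abstract results from the literature that translate uniform tail decay into statistical and entropic stability. The Main Theorem supplies exactly what is needed as input: for every $a\in E$ the map $f_a$ is non-uniformly expanding with slow recurrence to the critical point $0$, and the tail sets satisfy the uniform exponential bound $|\Gamma^n_a|\le Ce^{-\tau n}$, with constants $c,\epsilon,\delta,C,\tau$ chosen independently of $a\in E$ (see the Remark).

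First I would verify the standing hypotheses of \cite[Theorem~A]{alves1} for the Rovella family. The non-degeneracy condition on the critical set $\{0\}$ required in \cite{alves1} is guaranteed by the two-sided bound $K_2|x|^{s-1}\le f_a'(x)\le K_1|x|^{s-1}$ of C1), with constants independent of $a\in E$; the uniform non-uniform expansion and slow recurrence, together with the summability in $n$ of $|\Gamma_a^n|$ uniformly in $a$, come directly from the Main Theorem. Applying \cite[Theorem~A]{alves1} at an arbitrary $a_0\in E$ then yields that the density map $a\mapsto d\mu_a/dm$ is continuous into $L^1(m)$ at $a_0$.

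Second, I would invoke \cite[Theorem~B]{alves5} to upgrade the $L^1$ continuity of the densities to continuity of the metric entropies $a\mapsto h_{\mu_a}(f_a)$. The hypotheses of that theorem consist of the uniform integrable tail decay --- again delivered by the Main Theorem --- together with the $L^1$ continuity established in the previous step, so the entropy continuity follows.

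The only point that needs attention, and the reason the corollary is \emph{immediate} rather than altogether \emph{trivial}, is the uniformity of all the constants on $E$: without it the abstract theorems would merely give continuity at $a=0$, whereas what is asserted is continuity at every parameter in $E$. This uniformity is however already built into the statement of the Main Theorem (and explicitly highlighted in the Remark), so the two external invocations can be applied verbatim at any $a_0\in E$ and no further work is needed.
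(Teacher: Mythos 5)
Your argument is exactly the paper's: the corollary is deduced by feeding the uniform tail estimate of the Main Theorem (with constants uniform on $E$, as noted in the Remark) into \cite[Theorem~A]{alves1} for the $L^1$ continuity of the densities and into \cite[Theorem~B]{alves5} for the continuity of the entropy, after checking the non-degeneracy of the critical set via C1). The proposal is correct and matches the paper's proof.
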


Finally, we obtain several statistical properties for the SRB measures associated to the family of Rovella one-dimensional maps.
In the formulation of our statistical properties we consider the space
of \emph{H\"older continuous functions} with H\"older constant $\gamma>0$, for some $\gamma>0$. This is the space of functions $\varphi :I\to \mathbb R$ with finite H\"older norm
$$
\|\vap\|\equiv\|\vap\|_\infty+\sup_{y_1\neq y_2}\frac{|\vap(y_1)-\vap(y_2)|}{|y_1,y_2|^\gamma}.
$$
For a precise formulation of the concepts below see Appendix~\ref{ap.a}.

\begin{maincorollary}\label{co.statistical}
For all $a\in E$, the SRB measure $\mu_a$ satisfies:
\begin{enumerate}
\item exponential decay of correlations for H\"older against  $L^\infty(\mu_a)$ observables;
\item exponential large deviations for H\"older observables.
\item the Central Limit Theorem,  the vector-valued Almost Sure Invariance Principle,  
the Local Limit Theorem and the Berry-Esseen Theorem for certain H\"older observables.
\end{enumerate}
\end{maincorollary}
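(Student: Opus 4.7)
The plan is to derive each of the three statistical properties from Theorem~\ref{maintheorem} by feeding the exponential tail bound $|\Gamma^n_a|\le Ce^{-\tau n}$ into the well-developed machinery that extracts limit laws from exponentially decaying expansion and recurrence times. The engine of the proof is the construction of a Young tower for $f_a$, i.e., a Gibbs-Markov induced scheme whose return-time function has exponentially decaying tails with respect to Lebesgue measure. Once such a tower is in place, each of items (1)--(3) reduces to an abstract theorem in the literature applied to towers with exponential tails.

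First, I would use the non-uniform expansion and slow recurrence produced by Theorem~\ref{maintheorem}, together with the non-degeneracy of the critical point from condition C1) and the negative Schwarzian estimate B4), to build a reference interval $\Delta\subset I$ of positive Lebesgue measure, a return-type function $R:\Delta\to\mathbb{N}$, and an induced map $f_a^R:\Delta\to\Delta$ which is a full-branched piecewise expanding Markov map with uniformly bounded distortion. The key structural point is that $R$ can be dominated by the expansion and recurrence time functions $\mathcal{E}_a$ and $\mathcal{R}_a$; consequently the exponential decay of $|\Gamma^n_a|$ transfers to an exponential tail $|\{R>n\}|\le C'e^{-\tau'n}$. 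This is the one-dimensional analogue of the construction of Alves-Luzzatto-Pinheiro, carried out in the Benedicks-Carleson setting by Freitas, and the control of $R$ by the exponential tails of $\mathcal{E}_a$ and $\mathcal{R}_a$ is precisely what Theorem~\ref{maintheorem} was designed to yield.

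With a Young tower of exponential tails at hand, each conclusion of Corollary~\ref{co.statistical} becomes a direct application of an abstract theorem. For item (1), the exponential decay of correlations between H\"older observables and $L^\infty(\mu_a)$ observables follows from the spectral gap for the transfer operator on the quotient tower, as established by Young. For item (2), exponential large deviations for H\"older observables follow from the results of Rey-Bellet--Young, or from Melbourne--Nicol, under exactly the exponential-tail hypothesis that has just been verified. For item (3), the Central Limit Theorem and the vector-valued Almost Sure Invariance Principle follow from the martingale-approximation techniques of Melbourne--Nicol applied to the induced scheme, while the Local Limit Theorem and the Berry-Esseen Theorem follow from the aperiodic spectral theory of Gou\"ezel; these last two require a non-arithmeticity / aperiodicity hypothesis on the observable, which is the reason the statement is restricted to \emph{certain} H\"older observables.

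The main technical obstacle is the tower construction itself. One must choose $\Delta$ away from the critical orbit and partition it into countably many cylinders on each of which $f_a^R$ is a diffeomorphism onto $\Delta$, with uniformly bounded distortion controlled via condition B4), all the while ensuring that $R$ remains comparable to the expansion and recurrence times so that the tail estimate from Theorem~\ref{maintheorem} survives the inducing procedure. This step is delicate because the discontinuity of $f_a$ at $0$ and the degeneracy of $f_a'$ near $0$ both obstruct a naive construction, but it is exactly the step that the Benedicks-Carleson-type estimates underlying Theorem~\ref{maintheorem} make possible. Once the tower is produced, the rest of Corollary~\ref{co.statistical} is a bookkeeping exercise consisting of citing the appropriate abstract limit theorems for Young towers with exponential tails.
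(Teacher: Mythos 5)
Your proposal follows essentially the same route as the paper: the paper deduces the existence of a Young tower with exponentially decaying return times directly from Theorem~\ref{maintheorem} via Gou\"ezel's abstract result \cite[Theorem~3.1]{gouezel} (which packages the inducing construction you describe), and then cites Young for decay of correlations and the CLT, Melbourne--Nicol for large deviations and the vector-valued ASIP, and Gou\"ezel for the Local Limit Theorem and Berry--Esseen, exactly as you do. The argument is correct and matches the paper's proof.
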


The exponential decay of correlations  has already been obtained in \cite{metzger}  for the subset of parameters in $E$ (still with full density at 0) for which some strong topological mixing conditions holds.

It is not difficult to see how Corollary~\ref{co.statistical} can be deduced from Theorem~\ref{maintheorem}.
Actually, it follows from Theorem~\ref{maintheorem} and \cite[Theorem~3.1]{gouezel} that each $f_a$ with $a\in E$ has a Young tower with exponential tail of recurrence times. 
Then, the exponential decay of correlations and the Central Limit Theorem follows from \cite[Theorem~4]{young}; the exponential large deviations follows from  \cite[Theorem~2.1]{MN}; 
 the vector-valued Almost Sure Invariance Principle follows from  \cite[Theorem~2.9]{MN1}; and finally, 
the Local Limit Theorem and the Berry-Esseen Theorem follow from  \cite[Theorem~1.2 \& Theorem~1.3]{G1}.

From Section~\ref{se.bounded} to Section~\ref{se.measure.tail} we shall prove Theorem~\ref{maintheorem}. 

\section{Expansion and bounded distortion}\label{se.bounded}

The following lemma gives a first property for the dynamics of maps with parameters near the parameter 0. This appears as an initial step in the construction of the set of Rovella parameters.

\begin{lemma}\label{lem1}
There are $\lambda>1$ and $\delta_0>0$ such that for any $0<\delta<\delta_0$ there are $a_0(\delta)>0$ and $c(\delta)>0$ 
such that given any $x\in I$ and $a\in [0,a_0(\delta)]$,
\begin{enumerate}
\item if $x, f_a(x),\ldots, f_a^{n-1}(x)\notin (-\delta, \delta)$, then $(f_a^n)'(x)\geq c(\delta)\lambda^n$;
\item if $x, f_a(x),\ldots, f_a^{n-1}(x)\notin (-\delta, \delta)$ and  $f_a^n(x)\in (-\delta,\delta)$, then $(f_a^n)'(x)\geq\lambda^n$;
\item if $x, f_a(x),\ldots, f_a^{n-1}(x)\notin (-\delta, \delta)$ and $f_a^n(x)\in (-e^{-1}, e^{-1})$, then $(f_a^n)'(x)\geq\frac 1e\lambda^n$.
\end{enumerate}
\end{lemma}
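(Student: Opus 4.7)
The lemma is a perturbative Ma\~n\'e-type hyperbolicity statement: one needs uniform expansion for $f_a$ outside a fixed neighborhood of the critical point $0$, for all parameters $a$ close to $0$. The plan is first to establish the conclusion at $a=0$ via classical one-dimensional dynamics, then to propagate it to $a \in [0, a_0(\delta)]$ by continuity of the family, and finally to sharpen the constant for the two ending-in-a-neighborhood conditions in (2) and (3).

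First I would focus on the unperturbed map $f_0$. Property B4) gives negative Schwarzian derivative, while B3) says that $\pm 1$ are pre-periodic repelling; since $\pm 1$ are precisely the one-sided limits $\lim_{x\to 0^\pm} f_0(x)$, the forward orbit of the critical point coincides after one step with the orbit of $\pm 1$ and therefore eventually lands on a repelling periodic orbit. Singer's theorem then rules out attracting or neutral periodic orbits, so by the Ma\~n\'e hyperbolicity theorem (see, e.g., de~Melo--van~Strien) for each $\delta>0$ there exist $\lambda(\delta)>1$ and $c_0(\delta)>0$ such that any orbit of $f_0$ whose first $n$ iterates avoid $(-\delta,\delta)$ satisfies $(f_0^n)'(x) \geq c_0(\delta)\lambda(\delta)^n$. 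Using that $\pm 1$ are periodic of fixed period and repelling with a specific rate coming from B3), I would then argue that $\lambda(\delta)$ is bounded below by a universal $\lambda>1$ for all $\delta$ in some interval $(0,\delta_0)$.

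Next I would extend the estimate to $a$ close to $0$. The Rovella construction provides $f_a \to f_0$ in $C^3$ on any compact subset of $I \setminus \{0\}$ as $a \to 0$, so on $I \setminus (-\delta/2,\delta/2)$ the derivatives of any fixed composition $f_a^n$ converge uniformly to those of $f_0^n$. Choosing a finite threshold $N_0(\delta)$ after which the inequality $(f_0^{N_0})'(x) \geq c_0(\delta)\lambda^{N_0}$ is forced, I would pick $a_0(\delta)$ small enough that the same estimate (with constant halved, say) holds for all $a \in [0, a_0(\delta)]$ and all $n \leq N_0(\delta)$, then bootstrap over $n$ by splitting orbit segments at times when the orbit is farthest from $0$ and chaining the uniform estimates. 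This yields part~(1) with a uniform $\lambda$ and a new constant $c(\delta)$.

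Finally, for parts (2) and (3), I would refine the bound using the minimum principle for maps with negative Schwarzian derivative: on each monotone branch $(f_a^n)'$ attains its minimum at an endpoint of the branch, and the assumption that $f_a^n(x)$ lands in a specified neighborhood of $0$ forces those endpoints into regions where the derivative is explicitly controlled by C1) and by the proximity to the repelling pre-periodic orbit of $\pm 1$ from B3). A direct computation then yields the clean constants $1$ and $1/e$, with the factor $e^{-1}$ in~(3) reflecting the size of the larger ending neighborhood relative to $\delta$. The main difficulty will be to ensure that $\lambda$ and $\delta_0$ can be chosen independently of $\delta$ and $a$: the constants $c(\delta)$ and $a_0(\delta)$ may degrade as $\delta\to 0$, but isolating the uniform hyperbolic rate inherited from the repelling periodic orbits of $f_0$ from the $\delta$-dependent boundary corrections will be the delicate point.
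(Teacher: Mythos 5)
Your outline follows essentially the same route as the paper: the paper does not rederive the Ma\~n\'e-type hyperbolicity either, but imports it directly from Rovella's Lemmas~1.1 and~1.2, which already give, for all $a\in[0,a_1(\delta)]$, a uniform rate $\lambda_0>1$ for excursions outside $(-\delta_0,\delta_0)$ and a block estimate $(f_a^m)'\geq\lambda_1^m$ outside $(-\delta_0,\delta_0)$; the proof then just chains these blocks (the factor $c(\delta)=\min_r (K_2/\lambda_1)^r\delta^{r(s-1)}$ accounts for the last incomplete block via C1)), and part~(3) is obtained exactly as you propose, from the minimum principle applied to the maximal branch of $f_0^k$, giving $(f_0^k)'(y)>1-e^{-1}$ and hence the prefactor $1/e$.

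The one concrete soft spot is your treatment of part~(2). The minimum principle, applied as in part~(3), only bounds $(f_a^n)'(x)$ below by the distance from $f_a^n(x)$ to the critical values, i.e.\ by a constant of the form $1-\delta<1$; it does not by itself deliver the prefactor $1$ in $(f_a^n)'(x)\geq\lambda^n$, and for short blocks you cannot simply absorb a sub-unit constant into the rate (for $n=1$ the claim is that $f_a'(x)\geq\lambda>1$ whenever $f_a(x)$ first enters $(-\delta,\delta)$, which is a statement about the distance of the preimages of $0$ from $0$, i.e.\ about the Misiurewicz structure of the preperiodic critical orbit, not about Schwarzian alone). The paper gets this from the second of Rovella's estimates, $(f_a^j)'(y)\geq\lambda_1^j$ whenever $f_a^j(y)\in(-\delta_0,\delta_0)$ for some $j<m$. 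If you want to keep your self-contained derivation, you need to prove that Misiurewicz-type return estimate for $f_0$ (branch images of returning branches cover a definite neighborhood of $0$, combined with the expansion along the postcritical repelling orbits) before perturbing in $a$; otherwise the argument for part~(2) as written does not close.
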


\begin{proof} Though not explicitly stated in the present form, this result has essentially been obtained in the proof of  \cite[Part IV, Lemma 1]{rovella}. Here we follow the main steps of that proof in order to enhance some extra properties that we need, specially the third item and the factor $c(\delta)$ in the first item.

It was proved in \cite[Lemmas  1.1 and 1.2]{rovella} that there are $\delta_0$ and $\lambda_0$ (only depending on initial vector field) such that for any $\delta<\delta_0$ there exists $a_1(\delta)$ such that for every $|y|\in(\delta, \delta_0)$ and $a\in[0,a_1(\delta)]$ there exists $p=p(a,y)$ such that
\begin{equation*}
|f^j_a(y)|>\delta_0,\quad\mbox{for}~1\leq j\leq p\quad\mbox{and}~(f_a^p)'(y)\geq \lambda_0^p.
\end{equation*}
Also, there are $m$, $\lambda_1>1$ and $a_2$ depending on $\delta_0$ such that 
\begin{equation*}
(f_a^m)'(y)\geq\lambda_1^m,\quad\mbox{whenever}\quad y, f_a(y),\ldots, f_a^{m-1}(y)\notin (-\delta_0,\delta_0),
\end{equation*}
and 
\begin{equation*}
(f_a^j)'(y)\geq\lambda_1^j,\quad\mbox{if}~f_a^j(y)\in(-\delta_0,\delta_0)\quad\mbox{for some }~1\leq j<m.
\end{equation*}
for $a\in[0,a_2]$.
Take $$c(\delta)=\min\{(K_2/\lambda_1)^r\delta^{r(s-1)}: 1\leq r\leq m\},$$ $a_0(\delta)=\min\{a_1(\delta), a_2(\delta_0)\}$ and $\lambda=\min\{\lambda_0,\lambda_1\}$. Consider $n$ and $x$ as in the statement and $a\in[0,a_0(\delta)]$. First suppose that $f_a^j(x)\notin(-\delta_0,\delta_0)$ for all $0\leq j\leq n$. We can write $n=mq+r$ for some $q\geq 0$ and $0\leq r<m$, then
\begin{equation}\label{firstcase}
(f_a^{mq+r})'(x)=(f^r_a)'(f^{qm}_a)(f^{qm}_a)'(x)\geq (K_2/\lambda_1)^r\delta_0^{r(s-1)}\lambda_1^r(\lambda_1^m)^q\geq c(\delta)\lambda^n,
\end{equation}
if $f_a^n(x)\in(-\delta_0,\delta_0)$, we replace $K_2^r\delta_0^{r(s-1)}$ with $\lambda_1^r$.

Now suppose the orbit of $x$ up to $n$ intersect $(-\delta_0,\delta_0)$. We define $0\leq t_1<\ldots<t_{\nu}<n$ as follows. Let $t_1=\min\{j\geq 0:f^j_a(x)\in(-\delta_0,\delta_0)\}$ and $t_{i+1}$ be the smallest $j$ with $t_i+p_i<j<n$ for which $f^j_a(x)\in(-\delta,\delta)$. 
%=\min\{i: t_j+p_j<i<n~\mbox{and}~f^i_a(x)\in(-\delta,\delta)\}$. 
For the time intervals $J=[0,t_1)$ and $J=[t_i,t_i+p_i)\cup [t_i+p_i,t_{i+1})$, $1\leq i< {\nu}$, we get
\begin{equation}\label{1.}
\prod\limits_{j\in J}f'_a(f_a^j(x))\geq \lambda^{|J|}.
\end{equation}
If $n\geq t_{\nu}+p_{\nu}$, then \eqref{1.} holds for $J=[t_{\nu},t_{\nu}+p_{\nu})$ and for $J=[t_{\nu}+p_{\nu},n)$ if $f_a^n(x)\in(-\delta,\delta)$. For the time interval $J=[t_{\nu}+p_{\nu},n)$ if $f_a^n(x)\notin(-\delta_0,\delta_0)$ and for $J=[t_{\nu}, n)$ When $n< t_{\nu}+p_{\nu}$, as in \eqref{firstcase} we have 
\begin{equation*}
\prod\limits_{j\in J}f'_a(f_a^j(x))\geq c(\delta)\lambda^{|J|}.
\end{equation*}
Without loss of generality, we may assume that the critical values $\pm1$ are fixed by $f_0$.
Given $y$ such that $f^k(y)\in (-e^{-1},e^{-1})$, let $[a,b]$ be the maximal interval of continuity of $f_0^k$ that contains $y$. %, as in Figure \ref{bb}.
Then
$$\frac{f_0^k(b)-f_0^k(y)}{b-y}>1-f_0^k(y)\geq 1-e^{-1}=\frac 1e(e-1),$$
$$\frac{f_0^k(y)-f_0^k(a)}{y-a}> f_0^k(y)+1\geq 1-e^{-1}=\frac 1e(e-1).$$
This implies $(f_0^k)'(y)>\frac 1e(e-1)$, because $f_0$ has negative Schawarzian derivative and otherwise it violates the minimum principle. 

Let $m$ be as above and take $\lambda_2>1$ such that $\lambda_2^m< e-1$. From what we have seen above, we have  for all small values of $a$ and  each $k<m$
$$(f_a^k)'(y)>\frac 1e\lambda_2^m,\quad\mbox{whenever }f_a^k(y)\in(-e^{-1},e^{-1}).$$
Given $n$ and $x$ as in the assumption, let $0\le k<m$ be such that $n=pm+k$ for some integer $p$. 
Taking $\lambda=\min\{\lambda_1,\lambda_2\}$, we obtain
\begin{align*}
(f^n)'(x)=(f^{pm+k})'(x)&=(f^k)'(f^{pm}(x))(f^m)'(f^{(p-1)m}(x))\ldots (f^m)'(x)\\
&\geq\frac 1e~\lambda_2^k~\lambda_1^m\ldots\lambda_1^m\\
&\geq\frac 1e~\lambda^{k+pm}\\
&=\frac 1e~\lambda^n.
\end{align*}
\end{proof}
To establish the meaning of close to critical set we introduce the neighborhoods of $0$
$$
U_m=(-e^{-m},e^{-m}),%\quad\mbox{and}\quad U_m^+=U_{m-1},
\quad\mbox{with }m\in\mathbb{N}.
$$
%and we define $\delta=e^{-\Delta}$. 
Given any point $x\in I$, the orbit of $x$ will be split into \emph{free periods}, \emph{returns} and \emph{bound periods}, which will occur in this order. 

The \emph{free periods} correspond to iterates in which the orbit is not inside $U_{\Delta}$ (for some big $\Delta$) nor in a bound period. During these periods the orbit of $x$ experiences an exponential growth of its derivative, provided we take parameters close enough to the parameter value~$0$, as shown in Lemma~\ref{lem1}.

We say that $x$ has a \emph{return} at a given $j\in \mathbb N$ if $f^j(x)\in U_{\Delta}$. We shall consider two types of returns: \emph{essential} or \emph{inessential}. In order to distinguish each type we need a sequence of partitions of $I$ into intervals %, $\mathcal{P}_0\prec \mathcal{P}_1\prec \cdots$, 
that will be defined later. The idea of this construction goes back to the work of Benedicks and Carleson in \cite{benedicks}.

The \emph{bound period}  is a period after a return time during which the orbit of $x$ is bound to the orbit of the critical point. In order to define  that precisely, suppose that the constant $\alpha$ in C3) has been taken small and let $\beta=s\alpha$. %enough so that 
%$$
%e^{{s\alpha}/{(s-1)}}<\lambda_c^{1/s},
%$$ 
%and fix 
%$$\beta\in \left(\frac{s+1}{s}\alpha,\frac{s}{s-1}\alpha\right).$$
Let $I_m=[e^{-m-1}, e^{-m})$ for $m\geq\Delta$, $I_m=-I_{-m}$ for $m<0$, and $I_m^+=I_{m-1}\cup I_m\cup I_{m+1}$. We consider for each $I_m$, the collection of $m^2$ equal length intervals $I_{m,1}, I_{m,2},\ldots,I_{m,m^2}$, whose union is $I_m$, and order these $I_{m,i}$ as follows: if $i>j$ then $\dist(I_{m,i},0)<\dist(I_{m,j},0)$. By $I_{m,i}^+$ we denote the union of $I_{m,i}$ with the two adjacent intervals of the same type.

\begin{definition}
Given $x\in I_m^+$, let $p(x)$ be the largest integer $p$ such that for $1\le j <p$
$$|f^j(x)-f^{j-1}(-1)|%=|f^j(x)-c_j^+|
\leq e^{-\beta j},\qquad\mbox{if }m>0,$$
and
$$|f^j(x)-f^{j-1}(1)|%=|f^j(x)-c_j^-|
\leq e^{-\beta j},\qquad\mbox{if }m<0.$$
 The time interval $1,\ldots, p(x)-1$ is called the \emph{bound period} for $x$.
\end{definition}

Let us denote by $p(m)$ the smallest $p$ such that above condition holds for all $x\in I_m^+$.
The \emph{bound period }for $I_m^+$ is the time interval $1,\ldots, p(m)-1$.  The proof of the next result is a consequence of conditions (C1)-(C3) and $f'\leq 4$; see \cite[Lemma 4.2]{metzger}. Note that $f$ indicates every map $f_a$ with $a\in E$.

%
%Observe that by assumption there is $K_1>1$ such that
%                 $$K_1^{-1}|x|^{s-1}\leq f'(x)\leq K_1|x|^{s-1},$$
\begin{lemma}\label{lemm}
If $\Delta$ is large enough, then $p(m)$ has the following properties for each $|m|>\Delta$:
\begin{enumerate}
\item there is $C_0=C_0(\alpha, \beta)$ such that for $j=0,\ldots, p(m)-1$
      $$\frac 1{C_0}\leq \frac{(f^j)'(y)}{(f^j)'(-1)}\leq C_0,\qquad\mbox{if }y\in [-1, f(e^{-|m|+1})],$$
      $$\frac 1{C_0}\leq \frac{(f^j)'(y)}{(f^j)'(1)}\leq C_0,\qquad\mbox{if }y\in [f(-e^{-|m|+1}), 1];$$\\

\item taking $K=(-\log 4+\log (K_1/s)+s)/(\beta+\log 4)$ we have
$$\frac{s|m|}{\beta+\log 4}-K\leq p(m)\leq\frac{s+1}{\beta+\log \lambda_c}|m|;$$\\

\item 
for all $x\in I_m^+$ we have
$$(f^{p(m)})'(x)\geq e^{(1-\beta\frac{s+2}{\beta+\log\lambda_c})|m|}.$$
      \end{enumerate}
\end{lemma}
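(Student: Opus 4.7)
The plan is to prove item (1) first and use it as the workhorse for items (2) and (3), following the outline in \cite[Lemma 4.2]{metzger}. For item (1) I would write the distortion as a telescoping sum
$$\log\frac{(f^j)'(y)}{(f^j)'(\pm 1)}=\sum_{i=0}^{j-1}\log\frac{f'(f^i(y))}{f'(f^i(\pm 1))}$$
and combine the asymptotic behavior $f'_a(z)\asymp|z|^{s-1}$ from C1, the bound-period inequality $|f^i(y)-f^i(\pm 1)|\le e^{-\beta i}$ for $i<p(m)$, and the basic assumption $|f^i(\pm 1)|\ge e^{-\alpha(i+1)}$ from C3. Because $\beta=s\alpha>\alpha$, the two points $f^i(y)$ and $f^i(\pm 1)$ are pinched to a common scale with relative error $O(e^{-(\beta-\alpha)i})$; summing the resulting geometric series then yields a uniform distortion constant $C_0=C_0(\alpha,\beta)$ on each of the two intervals indicated in the statement.

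For the lower bound in item (2) I would rely only on the crude estimate $f'\le 4$ together with $|f(x)-(\pm 1)|\le (K_1/s)|x|^s$ for $x\in I_m^+$ (obtained by integrating the upper half of C1 on the branch through $0$), giving
$$|f^j(x)-f^{j-1}(\pm 1)|\le 4^{j-1}(K_1/s)e^{-s(|m|-1)};$$
imposing $\le e^{-\beta j}$ and solving for $j$ reproduces exactly the inequality $p(m)\ge s|m|/(\beta+\log 4)-K$ with the stated $K$. For the upper bound I would instead produce a matching \emph{lower} estimate via MVT, item (1), and the Lyapunov condition C2, namely
$$|f^j(x)-f^{j-1}(\pm 1)|\ge C_0^{-1}\lambda_c^{j-1}(K_2/s)e^{-s(|m|+2)}.$$
The bound period must end before this exceeds $e^{-\beta j}$, hence $p(m)\le(s|m|+O(1))/(\beta+\log\lambda_c)$, which fits under $(s+1)|m|/(\beta+\log\lambda_c)$ once $|m|\ge\Delta$ with $\Delta$ large enough to swallow the additive constant.

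For item (3) the key is that, by the very definition of $p(m)$, there exists $x^*\in I_m^+$ at which the bound-period inequality fails at step $p(m)$, i.e.\ $|f^{p(m)}(x^*)-f^{p(m)-1}(\pm 1)|>e^{-\beta p(m)}$. Applying MVT and item (1) on the interval from $f(x^*)$ to $\pm 1$ yields $(f^{p(m)-1})'(\pm 1)\ge Ce^{s|m|-\beta p(m)}$; for arbitrary $x\in I_m^+$ the distortion bound transfers this estimate to $(f^{p(m)-1})'(f(x))$, and multiplying by $f'(x)\ge K_2 e^{-(s-1)(|m|+2)}$ from C1 gives
$$(f^{p(m)})'(x)\ge C'e^{|m|-\beta p(m)}.$$
Inserting the upper bound from item (2) produces the exponent $1-\beta(s+1)/(\beta+\log\lambda_c)$, and the slack between this and the stated $1-\beta(s+2)/(\beta+\log\lambda_c)$ absorbs $\log C'$ for $|m|\ge\Delta$ with $\Delta$ large. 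The hard part will be the distortion estimate in item (1): C1 alone provides only the two-sided comparability $K_2|z|^{s-1}\le f'_a(z)\le K_1|z|^{s-1}$, and turning it into an honest per-iterate ratio whose logarithm is summable requires upgrading it (uniformly in $a\in E$) to the form $f'_a(z)=g_a(z)|z|^{s-1}$ with $g_a$ Lipschitz and bounded away from $0$ and $\infty$, using the $C^3$ closeness to $f_0$ and the nondegeneracy in B2; after that, the rest of the lemma reduces to chasing constants through geometric series and linear inequalities.
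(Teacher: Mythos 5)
Your outline is essentially the standard bound-period argument, which is exactly what the paper relies on: it does not prove this lemma itself but cites \cite[Lemma~4.2]{metzger}, whose proof runs along the same three steps you describe (telescoped distortion sum during binding, the $f'\le 4$ upper bound and the MVT/Lyapunov lower bound for the two sides of item (2), and the failure of binding at time $p(m)$ for item (3)). Your constant chasing in items (2) and (3) reproduces the stated $K$ and the exponent correctly, with the $+1$ and $+2$ slacks absorbing the additive constants for $|m|\ge\Delta$ large.

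The one substantive point is the "hard part" you flag in item (1), and you are right that C1 alone is insufficient: the two-sided bound $K_2|z|^{s-1}\le f'(z)\le K_1|z|^{s-1}$ only controls each factor of the telescoped product up to the multiplicative constant $K_1/K_2$, which compounds over $j$ iterates. However, the fix need not go through a Lipschitz factorization $f'_a=g_a|z|^{s-1}$: the paper's Lemma~\ref{lem2} (whose proof uses the additional two-sided bound $K_2'|z|^{s-2}\le|f''(z)|\le K_1'|z|^{s-2}$ near the critical point) gives
$$\left|\log\frac{f'(z_1)}{f'(z_2)}\right|\le\sup\left|\frac{f''}{f'}\right|\,|z_1-z_2|\le \frac{C_1\,|z_1-z_2|}{\mathfrak{D}},$$
so each term of your sum is bounded by $C_1 e^{-\beta(i+1)}/(e^{-\alpha(i+1)}-e^{-\beta(i+1)})$, whose series converges since $\beta=s\alpha>\alpha$. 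This is the same mechanism the paper uses for the bound-period contribution in Proposition~\ref{distortion}, and it yields $C_0=C_0(\alpha,\beta)$ directly. With that substitution your proof is complete and matches the cited one.
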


Since $(f^n)'(x)=\prod_{j=0}^{n-1}f'(f^j(x))$, the return times introduce some small factors in the derivative of the orbit of $x$, but after the bound period not only that loss on the growth of the derivative has been recovered, as we have some exponential growth again.
Note that an orbit in a bound period can enter $U_{\Delta}$ and these instants are called \emph{bound return times}. 

Now we build inductively a sequence of the partitions $\mathcal{P}_0\prec\mathcal{P}_1\prec\ldots$ of $I$ (modulo a zero Lebesgue measure set) into intervals. We also build $R_n(\omega)=\{z_1,\ldots,z_{\gamma(n)}\}$, which is the set of the return times of $\omega\in\mathcal{P}_n$ up to $n$, and  $Q_n(\omega)=\{(m_1,k_1),\ldots,(m_{\gamma(n)},k_{\gamma(n)})\}$ which records the indices of the intervals such that $f^{z_i}(\omega)\subset I_{m_i,k_i}^+$ for $1\le i \le z_{\gamma(n)}$. By construction, we shall have for all $n\in\mathbb{N}_0$
\begin{equation}\label{eq:3}
\forall \omega\in\mathcal{P}_n,\hspace{.5cm}f^{n+1}|_{\omega}\hspace{.5cm} \mbox{is a diffeomorphism}.
\end{equation}
For $n=0$ we define
$$\mathcal{P}_0=\{[-1,-\delta] , [\delta, 1]\}\cup\{I_{m,i}: |m|\geq\Delta, 1\leq i\leq m^2\}.$$
It is obvious that \eqref{eq:3} holds for every $\omega\in\mathcal{P}_0$. Set 
$R_0([-1,-\delta])=R_0([\delta, 1])=\emptyset$ and $R_0(I_{m,k})=\{0\}$, also $Q_0([-1, -\delta])=Q_0([\delta, 1])=\emptyset$ and $Q_0(I_{m,k})=\{(m,k)\}$.

Assume now that $\mathcal{P}_{n-1}$ is defined and it satisfies (\ref{eq:3}) and $R_{n-1}$, $Q_{n-1}$ are defined on each element of $\mathcal{P}_{n-1}$. Fixing $\omega\in\mathcal{P}_{n-1}$, there are three possible situations:

\begin{enumerate}

\item If $R_{n-1}(\omega)\neq\emptyset$ and $n\leq z_{\gamma(n-1)}+p(m_{\gamma(n-1)})$, we call $n$ a \emph{bound time} for $\omega$, and put $\omega\in\mathcal{P}_n$ and set $R_n(\omega)=R_{n-1}(\omega)$ and $Q_n(\omega)=Q_{n-1}(\omega)$.

\item If $R_{n-1}(\omega)=0$ or $n> z_{\gamma(n-1)}+p(m_{\gamma(n)})$ and $f^n(\omega)\cap U_{\Delta}\subset I_{\Delta,1}\cup I_{-\Delta,1}$, we call $n$ a \emph{free time} for $\omega$, put $\omega\in\mathcal{P}_n$ and set $R_n(\omega)=R_{n-1}(\omega)$ and $Q_n(\omega)=Q_{n-1}(\omega)$.

\item If the two above conditions do not hold, $\omega$ has a \emph{return situation} at time $n$. We consider two cases:

\begin{enumerate}
\item $f^n(\omega)$ does not cover completely an interval $I_{m,k}$. Since $f^n|_{\omega}$ is a diffeomorphism and $\omega$ is an interval, $f^n(\omega)$ is also an interval and thus is contained in some $I_{m,k}^+$, which is called the \emph{host interval} of the return. We call $n$ an \emph{inessential return time} for $\omega$, put $\omega\in\mathcal{P}_n$ and set    $R_n(\omega)=R_{n-1}(\omega)\cup \{n\}$, $Q_n(\omega)=Q_{n-1}(\omega)\cup \{(m,k)\}$.
\item $f^n(\omega)$ contains at least an interval $I_{m,k}$, in which case we say that $\omega$ has an \emph{essential return situation} at time $n$. Take
$$
\begin{array}{rl}
\omega_{m,k}=&f^{-n}(I_{m,k})\cap\omega,\\
\omega_+=&f^{-n}([\delta, 1])\cap\omega,\\
\omega_-=&f^{-n}([-\delta, -1])\cap\omega.
\end{array}
$$
We have $\omega\backslash f^{-n}(0)=\cup~\omega_{m,k}\cup \omega_+ \cup \omega_-$. By the induction                             hypothesis $f^n|_{\omega}$ is a diffeomorphism and then each $\omega_{m,k}$ is an interval. Moreover $f^n(\omega_{m,k})$ covers $I_{m,k}$ may except for the two end intervals. We join $\omega_{m,k}$ with its adjacent interval, if it dose not cover $I_{m,k}$ entirely. We also proceed likewise when $f^n(\omega_+)$ does not cover $I_{\Delta-1,(\Delta-1)^2}$ or $f^n(\omega_-)$ does not cover $I_{1-\Delta,(\Delta-1)^2}$. So we get a new decomposition of $\omega\backslash(\omega_+\cup\omega_-)$ into intervals $\omega_{m,k}$ (mod 0) such that $I_{m,k}\subset f^n(\omega_{m,k})\subset I_{m,k}^+$. Put $\omega_{m,k}\in\mathcal{P}_n$ for all indices $(m,k)$ such that $\omega_{m,k}\neq\emptyset$, set $R_n(\omega_{m,k})=R_{n-1}(\omega)\cup \{n\}$ and call $n$ an \emph{essential return time} for $\omega_{m,k}$. The interval $I_{m,k}$ is called the \emph{host interval} of $\omega_{m,k}$ and $Q_n(\omega_{m,k})=Q_{n-1}(\omega)\cup\{(m,k)\}$. In the case when $f^n(\omega_+)$ covers $I_{\Delta-1,(\Delta-1)^2}$ we say $n$ is an \emph{escape time} for $\omega_+$ and $R_n(\omega_+)=R_{n-1}(\omega)$, $Q_n(\omega_+)=Q_{n-1}(\omega)$. We proceed similarly for $\omega_-$. We refer to $\omega_+$ and $\omega_-$ as \emph{escaping components}.
\end{enumerate}
\end{enumerate}
To end the construction we have to verify that (\ref{eq:3}) holds for $\mathcal{P}_n$. Since for any interval $J\subset I$
$$\left.
       \begin{array}{l}
             f^n|_J ~~\mbox{is a diffeomorphism}\\
             0\notin f^n(J)
       \end{array}
\right\}
\Rightarrow f^{n+1}|_J ~~\mbox{is a diffeomorphism,}
$$
we are left to prove $0\notin f^n(\omega)$ for all $\omega\in\mathcal{P}_n$. Take $\omega\in\mathcal{P}_n$. If $n$ is a free time, there is nothing to prove. If $n$ is a return time, either essential or inessential, we have by construction $f^n(\omega)\subset I_{m,k}^+$ for some $|m|\geq\Delta$ and $k=1, 2,\ldots, m^2$, and thus $0\notin f^n(\omega)$. If $n$ is a bound time, then by definition of bound period and the basic assumption C3) for all $x\in\omega$ whit $m_{\gamma(n-1)}>0$ we have
$$\begin{array}{rl}
|f^n(x)|\geq&|f^{n-1-z_{\gamma(n-1)}}(-1)|-|f^n(x)-f^{n-1-z_{\gamma(n-1)}}(-1)|\\
\geq&|f^{n-1-z_{\gamma(n-1)}}(-1)|-|f^{n-z_{\gamma(n-1)}}(f^{z_{\gamma(n-1)}}(x))-f^{n-1-z_{\gamma(n-1)}}(-1)|\\
\geq&e^{-\alpha(n-z_{\gamma(n-1)})}-e^{-\beta(n-z_{\gamma(n-1)})}\\
\geq&e^{-\alpha(n-z_{\gamma(n-1)})}(1-e^{-(\beta-\alpha)(n-z_{\gamma(n-1)})})\\
>&0,\hspace{2cm}\mbox{since}~~\beta-\alpha>0~~\mbox{and}~~n-z_{\gamma(n-1)}\geq 1.
\end{array}$$
The same conclusion can be drawn for $\omega\in\mathcal{P}_n$ with $m_{\gamma(n-1)}<0$. We just need to replace $-1$ by $1$ in the above calculation.

Now we want to estimate the length of $|f^n(\omega)|$. The next lemma follows from Lemmas~\ref{lem1} and \ref{lemm} above exactly in the same way as  in \cite[Lemma~4.1]{freitas05}.
\begin{lemma}\label{lem3}
Let $z$ be a return time for $\omega\in\mathcal{P}_{n-1}$ with host interval $I_{m,k}^+$ and  $p=p(m)$;
\begin{enumerate}
\item assuming $z^*\leq n-1$ is the next return time for $\omega$, either essential or inessential, and defining $q=z^*-(z+p)$, we have $$|f^{z^*}(\omega)|\geq\lambda^qe^{\left(1-\beta\frac{s+2}{\beta+\log \lambda_c}\right)|m|}|f^z(\omega)|,$$
and for sufficiently large $\Delta$ it follows that $|f^{z^*}(\omega)|\geq 2|f^z(\omega)|$;
\item if $z$ is the last return time of $\omega$ up to $n-1$ and $n$ is either a free time or a return situation for $\omega$ and put $q=n-(z+p)$, then 
\begin{enumerate}
\item $|f^n(\omega)|\geq c(\delta)\lambda^q e^{\left(1-\beta\frac{s+2}{\beta+\log \lambda_c}\right)|m|}|f^z(\omega)|$,
\item if $z$ is an essential return and for a sufficiently large $\Delta$
$$|f^n(\omega)|\geq c(\delta)\lambda^q e^{-\beta\frac{s+3}{\beta+\log \lambda_c}|m|};$$
\end{enumerate}
\item if $z$ is the last return time of $\omega$ up to $n-1$, $n$ is a return situation and $f^n(\omega)\subset U_{1}$, then for a sufficiently large $\Delta$ and $q=n-(z+p)$
\begin{enumerate}
\item $|f^n(\omega)|\geq \lambda^q e^{\left(1-\beta\frac{s+3}{\beta+\log \lambda_c}\right)|m|}|f^z(\omega)|\geq 2|f^n(\omega)|$,
\item $|f^n(\omega)|\geq \lambda^q e^{-\beta\frac{s+3}{\beta+\log \lambda_c}|m|}$, if $z$ is an essential return.
\end{enumerate}
\end{enumerate}
\end{lemma}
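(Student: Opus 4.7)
The plan is to apply the mean value theorem to $f^{n-z}$ restricted to $f^z(\omega)$ and then factor the derivative across the bound window $[z, z+p)$ and the subsequent free window $[z+p, n)$. That is, there is $\xi\in f^z(\omega)$ with
\[
|f^n(\omega)|\;\geq\;(f^{n-z})'(\xi)\cdot|f^z(\omega)|\;=\;(f^q)'(f^p(\xi))\cdot(f^p)'(\xi)\cdot|f^z(\omega)|,
\]
where $q=n-(z+p)$ (respectively $z^{\ast}-(z+p)$ for part (1)). The bound-period factor is handled uniformly by Lemma~\ref{lemm}(3), giving $(f^p)'(\xi)\geq e^{(1-\beta(s+2)/(\beta+\log\lambda_c))|m|}$, since $f^z(\omega)\subset I_m^+$. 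The free-period factor is handled by Lemma~\ref{lem1} with the case distinction dictated by where the endpoint $f^n(\xi)$ (or $f^{z^\ast}(\xi)$) lands.

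For part (1), by definition $z^{\ast}$ is the next return, so $f^{z^{\ast}}(\xi)\in U_{\Delta}\subset(-\delta,\delta)$, and we apply Lemma~\ref{lem1}(2) to get $(f^q)'(f^p(\xi))\geq\lambda^q$. Multiplying the three factors gives the first inequality. The doubling statement $|f^{z^{\ast}}(\omega)|\geq 2|f^z(\omega)|$ follows because $\beta=s\alpha$ has been taken small enough for the coefficient $1-\beta(s+2)/(\beta+\log\lambda_c)$ to be strictly positive, so enlarging $\Delta$ forces $e^{(1-\beta(s+2)/(\beta+\log\lambda_c))|m|}\geq 2$ for $|m|\geq\Delta$.

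For part (2), the same identity is used, but now the endpoint behaviour depends on $n$: if $n$ is a return situation then $f^n(\xi)\in U_{\Delta}\subset(-\delta,\delta)$ and Lemma~\ref{lem1}(2) supplies $\lambda^q$; if $n$ is a free time then $f^n(\xi)\notin U_{\Delta}$ and Lemma~\ref{lem1}(1) supplies the weaker $c(\delta)\lambda^q$. Taking the worse constant gives (2a). For (2b), the essentiality of the return $z$ means that by construction $f^z(\omega)\supset I_{m,k}$, so $|f^z(\omega)|\geq|I_{m,k}|\gtrsim e^{-|m|}/m^2$; inserting this into (2a) and absorbing the polynomial $1/m^2$ into the exponential by bumping $s+2$ up to $s+3$ (valid once $\Delta$ is large) yields the stated estimate.

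Part (3) is parallel, the only change being that now $f^n(\xi)\in U_1=(-e^{-1},e^{-1})$, which forces the use of Lemma~\ref{lem1}(3) and thus a factor $1/e$ in place of $1$. Since we are deeper in the critical neighbourhood, we can afford to worsen the bound-period exponent further ($s+2\mapsto s+3$); the resulting $e^{-|m|}$-improvement swallows the constant $1/e$ in (3a) and, combined with $|f^z(\omega)|\geq|I_{m,k}|$, also the polynomial loss $1/m^2$ in (3b). The doubling in (3a) is again ensured by taking $\Delta$ large enough so that the remaining positive-exponent factor in $|m|$ is at least $2$.

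The conceptual content is entirely contained in Lemmas~\ref{lem1} and \ref{lemm}; the main obstacle is bookkeeping. One must keep track of where the relevant endpoint falls in order to select the correct variant of Lemma~\ref{lem1}, verify in each sub-case that the net exponent of $|m|$ remains positive after absorbing polynomial factors and the constants $c(\delta)$, $1/e$, and check that the choices of $\alpha$, $\beta=s\alpha$, and $\Delta$ fixed earlier are compatible with all the inequalities simultaneously.
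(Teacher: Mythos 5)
Your proof is correct and takes essentially the same route the paper intends: the paper gives no written proof, merely asserting that the lemma "follows from Lemmas~\ref{lem1} and \ref{lemm} exactly as in Freitas's Lemma~4.1," and that argument is precisely your mean-value-theorem decomposition into a bound-period factor (Lemma~\ref{lemm}(3)) and a free-period factor (the appropriate case of Lemma~\ref{lem1}), with the essential-return lower bound $|f^z(\omega)|\geq|I_{m,k}|$ and the $s+2\mapsto s+3$ exponent slack absorbing the polynomial and constant losses.
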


The next lemma tells us that the escape component returns considerably large in the return situation after the escape time. Though the content is similar to a lemma in \cite[Lemma~4.2]{Fr06}, here we cannot use the quadratic expression of the map.

\begin{lemma}\label{lemi}
If $\omega\in\mathcal{P}_{t}$ is an escape component, then in the next return situation $t_1$ for $\omega$ we have
$$|f^{t_1}(\omega)|\geq e^{-\beta\frac{s+3}{\beta+\log\lambda_c}\Delta}.$$
\end{lemma}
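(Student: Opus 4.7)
I would adapt the argument of~\cite[Lemma~4.2]{Fr06} to the Rovella setting by replacing the use of the explicit quadratic form of the unimodal map with condition (C1). The proof should combine three ingredients: the initial lower bound on $|f^t(\omega)|$ coming from the escape condition; the cost of one iterate through the critical neighborhood; and the uniform expansion accumulated during the free period $(t,t_1)$.

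The definition of escape gives immediately
$$|f^t(\omega)|\geq |I_{\Delta-1,(\Delta-1)^2}|=\frac{(e-1)e^{-\Delta}}{(\Delta-1)^2}.$$
Moreover $f^t(\omega)\subset[\delta,1]$ sits adjacent to $\delta=e^{-\Delta}$, so integrating the inequality $K_2|x|^{s-1}\leq f'(x)\leq K_1|x|^{s-1}$ from (C1) over $f^t(\omega)$ yields
$$|f^{t+1}(\omega)|\geq C\,\frac{e^{-s\Delta}}{(\Delta-1)^2},$$
with the image $f^{t+1}(\omega)$ located in a small neighborhood of one of the critical values $\pm 1$.

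Between $t+1$ and $t_1$ the partition element $\omega$ is in free time for $\omega$, so the iterates $f^{j}(\omega)$ intersect $U_\Delta$ only through the outermost layers $I_{\pm\Delta,1}$. Applying Lemma~\ref{lem1} (items (1) and (3), absorbing each innermost-layer visit into the $c(\delta)$ factor) to a suitable endpoint $y\in f^{t+1}(\omega)$ produces $(f^{t_1-t-1})'(y)\geq c(\delta)\,\lambda^{t_1-t-1}$, and combining this with the bounded distortion estimates that underpin the construction of $\mathcal{P}_n$ gives
$$|f^{t_1}(\omega)|\geq \kappa\,\lambda^{t_1-t-1}\,\frac{e^{-s\Delta}}{(\Delta-1)^2}.$$
A forced waiting time $t_1-t$ of order $\Delta$, coming from (BA) together with the observation that a tiny interval of size $\sim e^{-s\Delta}$ near $\pm 1$ needs substantial expansion before it can reach $U_\Delta$ at a depth beyond $I_{\pm\Delta,1}$, then upgrades this lower bound to the desired $e^{-\beta(s+3)\Delta/(\beta+\log\lambda_c)}$, using $\beta=s\alpha$ to reconcile the exponents.

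The principal obstacle is this last accounting step: absent a closed-form expression for $f$, one has to track the $s$-dependence of every constant through the one-iterate loss and the distortion estimates, and verify that the forced waiting time $t_1-t$ absorbs the $e^{-s\Delta}$ loss precisely in the amount prescribed by the formula $\beta(s+3)/(\beta+\log\lambda_c)$. Since this exponent already appears in Lemma~\ref{lem3}\,(3)(b), I expect that the case analysis used there can be recycled here, with the escape time $t$ playing the role of an essential return at level $|m|=\Delta-1$ and with no bound period following it.
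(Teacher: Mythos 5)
Your opening steps (the lower bound $|f^t(\omega)|\geq|I_{\Delta-1,(\Delta-1)^2}|$ and the one-iterate estimate landing $f^{t+1}(\omega)$ near a critical value with length of order $e^{-s\Delta}/(\Delta-1)^2$) are fine, but the core of your argument --- treating $(t+1,t_1)$ as a free period and invoking Lemma~\ref{lem1} to get $(f^{t_1-t-1})'(y)\geq c(\delta)\lambda^{t_1-t-1}$ --- does not close. First, Lemma~\ref{lem1} is not applicable there: $f^{t+1}(\omega)$ shadows the critical orbit, which by (BA) is only guaranteed to satisfy $|f^{j-1}(\pm 1)|>e^{-\alpha j}$, so these iterates may enter any fixed neighbourhood $(-\delta,\delta)$; and if you instead take $\delta=e^{-\Delta}$, the prefactor $c(\delta)\sim \delta^{m(s-1)}$ is itself of order $e^{-m(s-1)\Delta}$ and destroys the estimate. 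Second, and more fundamentally, the accounting fails even ignoring $c(\delta)$: the forced waiting time is governed by the bound period, so $t_1-t$ can be as small as $p(\Delta)\leq\frac{s+1}{\beta+\log\lambda_c}\Delta$, while $p(\Delta)\geq\frac{s\Delta}{\beta+\log 4}-K$; since $\lambda\leq\sup f'\leq 4$, one gets $\lambda^{p(\Delta)}\lesssim e^{\frac{s\log 4}{\beta+\log 4}\Delta}<e^{s\Delta}$, so free-period expansion over the forced waiting time cannot even recoup the one-iterate loss $e^{-s\Delta}$. Your closing remark that $t$ plays the role of an essential return ``with no bound period following it'' is exactly backwards: the missing idea is that the escape component, containing $I_{\Delta-1,(\Delta-1)^2}$ adjacent to $U_\Delta$, \emph{does} undergo a bound period of length about $p(\Delta)$, and it is the recovery estimate of Lemma~\ref{lemm}(3), namely $(f^{p})'(x)\geq e^{(1-\beta\frac{s+2}{\beta+\log\lambda_c})\Delta}$ (which rests on the comparison with $(f^j)'(\pm 1)>\lambda_c^j$ and on the relation between the bound period length and the return depth), that converts the initial length $|f^t(\omega)|\geq e^{-\Delta}/\Delta^2$ into the stated bound. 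This is precisely how the paper argues in the case $f^t(\omega)\subseteq I_m$, after checking that the bound-period iterates stay outside $U_\Delta$ so that $t_1-t\geq p$.

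You also omit the complementary case $I_m\subseteq f^t(\omega)$ with $t_1-t<p(\Delta)$, in which the bound period has not ended when the return situation occurs and no derivative recovery is available. There the paper bounds the length from below by a purely geometric argument, $|f^{t_1}(\omega)|\geq \dist(f^{t_1-t}(I_m),U_\Delta)\geq e^{-\beta(t_1-t)}-e^{-\Delta}\geq e^{-\beta p(\Delta)}-e^{-\Delta}$, which is a mechanism absent from your outline.
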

\begin{proof}
If $f^{t_1}(\omega)\not\subseteq U_{1}$, there is nothing to prove. So suppose that $f^{t_1}(\omega)\subseteq U_{1}$. Since $\omega$ is an escape component at time $t$ it follows that
$$
f^{t}(\omega)\supseteq I_{m,m^2},\quad\mbox{with}~|m|=\Delta-1.
$$
First we assume that $f^t(\omega)\subseteq I_m$. Without loss of generality, assume that $m>0$. By definition of $p=p(\Delta)$, for every $j=1,\ldots, p-1$ and $x\in\omega$
$$|f^{j-1}(-1)|-|f^{t+j}(x)|\leq |f^{j+t}(x)-f^{j-1}(-1)|\leq e^{-\beta j}.$$
Hence
\begin{align*}
|f^{j+t}(x)|\geq& |f^{j-1}(-1)|-e^{-\beta j}\\
\geq& e^{-\alpha j}-e^{-\beta j}=e^{-\alpha j}(1-e^{(\alpha-\beta)j}),\hspace{.5cm}(BA)\\
\geq& e^{-\alpha p}(1-e^{\alpha-\beta}),\hspace{3.5cm}j<p,~\alpha-\beta<0\\
\geq& e^{-\alpha\frac{s+1}{\beta+\log\lambda_c}\Delta}(1-e^{\alpha-\beta}),\hspace{2.3cm}p\leq\frac{s+1}{\beta+\log \lambda_c}\Delta\\
\geq& e^{-\alpha\frac{s+2}{\beta+\log\lambda_c}\Delta},\hspace{4.1cm}1-e^{\alpha-\beta}>e^{-\alpha\frac{1}{\beta+\log\lambda_c}\Delta}\\
\geq& e^{-\Delta}\hspace{5.6cm}\alpha\frac{s+2}{\beta+\log\lambda_c}<1.
\end{align*}
Therefore, there is no return situation during times $j=1,\ldots, p-1$ and then 
$t_1-t\geq p$. Hence%\textbf{Following the proof of Lemma \ref{lem3}, part 3}, we have the result.
\begin{align*}
|f^{t_1}(\omega)|=&|f^{t_1-t}(f^t(\omega))|=(f^{t_1-t})'(f^t(x))|f^t(\omega)|,\quad\mbox{for some }x\in\omega\\
=&(f^q)'(f^{p+t}(x))(f^{p})'(f^t(x))|f^t(\omega)|,\quad q=t_1-t-p\\
\geq&\frac 1e \lambda^q e^{(1-\beta\frac{s+2}{\beta+\log\lambda_c})\Delta}|f^t(\omega)|,\quad\mbox{part 3 of Lemmas }\ref{lem1}, \ref{lemm}\\
\geq&\frac 1{e\Delta^2}e^{\frac{\beta}{\beta+\log\lambda_c}\Delta}\lambda^q e^{(1-\beta\frac{s+3}{\beta+\log\lambda_c})\Delta}e^{-\Delta},\quad|f^t(\omega)|\geq\frac{e^{-\Delta+1}-e^{-\Delta}}{(\Delta-1)^2}>\frac{e^{-\Delta}}{\Delta^2}\\
\geq&e^{-\beta\frac{s+3}{\beta+\log\lambda_c}\Delta},\quad\Delta\mbox{ large enough}. 
\end{align*}
Suppose now that $I_m\subseteq f^t(\omega)$. If $t_1-t\geq p(\Delta)$, then we obtain the result by knowing that 
$$|f^{t_1}(\omega)|=|f^{t_1-t}(f^t(\omega))|\geq|f^{t_1-t}(I_m)|\geq (f^{t_1-t})'(x)|I_m|,\qquad\mbox{for some }x\in I_m,$$
and following the proof in the previous case. Now we consider the case when $t_1-t< p(\Delta)$. In this case, the points in $I_m$ are in the bound period. So, for every $x\in I_m$ 
$$|f^{t_1-t}(x)|\geq e^{-\alpha(t_1-t)}-e^{-\beta(t_1-t)}\geq e^{-\beta(t_1-t)},$$
second inequality holds because $\beta>\alpha$ %\in(\frac{s+1}{s}\alpha,\frac{s}{s-1}\alpha)$
and $t_1-t$ is large by taking $\Delta$ to be big enough. Therefore 
\begin{align*}
|f^{t_1}(\omega)|&\geq \dist(f^{t-1-t}(I_m), U_{\Delta}) %\inf\{|f^{t_1-t}(x)|: x\in I_m\}-e^{\Delta}+|f^{t_1-t}(I_m)|
\\
&\geq e^{-\beta(t_1-t)}-e^{-\Delta} %+\frac{e-1}{e}\lambda^{t_1-t}_1e^{-\Delta}
\\
&\geq e^{-\beta p(\Delta)}-e^{-\Delta} %+\frac{e-1}{e}\lambda^{t_1-t}_1e^{-\Delta}
\\
&\geq e^{-\beta \frac{s+2}{\beta+\log\lambda_c}\Delta}-e^{-\Delta} %+\frac{e-1}{e}\lambda^{t_1-t}_1e^{-\Delta}
\\
&\geq e^{-\beta \frac{s+3}{\beta+\log\lambda_c}\Delta}.
\end{align*}

\end{proof}

Before we give the bounded distortion result, we prove a preliminary result. Let us define $\mathfrak{D}(\omega)$ as a distance of the interval $\omega$ from critical point, i.e.
$$\mathfrak{D}(\omega)=\inf\limits_{x\in\omega}|x|.$$ 

\begin{lemma}\label{lem2}
Given an interval $\omega$ such that $|\omega|\leq \mathfrak{D}(\omega)$, there exists $C_1>0$ such that
$$\sup\limits_{x, y\in\omega}\left|\frac{f''(x)}{f'(y)}\right|\leq \frac{C_1}{\mathfrak{D(\omega)}}.$$
\end{lemma}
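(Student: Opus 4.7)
The key observation is that the hypothesis $|\omega|\leq \mathfrak{D}(\omega)$ forces the interval $\omega$ to lie in a region where $|x|$ is comparable to $\mathfrak{D}(\omega)$. Indeed, for any $x\in\omega$ we have
$$\mathfrak{D}(\omega)\leq |x|\leq \mathfrak{D}(\omega)+|\omega|\leq 2\mathfrak{D}(\omega),$$
so on all of $\omega$ the point $x$ stays in a region of the form $[D,2D]$ or $[-2D,-D]$ with $D=\mathfrak{D}(\omega)$. This reduces the lemma to bounding $|f''|$ from above and $|f'|$ from below at points that all lie at distance of order $\mathfrak{D}(\omega)$ from the critical point $0$.

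The denominator is handled directly by condition C1): for every $y\in\omega$,
$$f'(y)\geq K_2|y|^{s-1}\geq K_2\,\mathfrak{D}(\omega)^{s-1}.$$
For the numerator I would use that the Rovella perturbations preserve the power-law asymptotics of $f_0$ near $0$ granted by B2) and C1), so that differentiating the comparison $f'(x)\asymp |x|^{s-1}$ yields a uniform upper bound of the form $|f''(x)|\leq K_3|x|^{s-2}$ on $I\setminus\{0\}$, valid for all $a\in E$. (This is the only nontrivial input: one has to check that $K_3$ can be chosen uniformly in $a$, which follows from the construction of $E$ together with the $C^3$ closeness to the original vector field.)

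Once the $f''$ estimate is in hand, I split into the two sub-cases according to the sign of $s-2$, exploiting the monotonicity of $t\mapsto t^{s-2}$ on $[D,2D]$. If $s\geq 2$ we have $|f''(x)|\leq K_3(2\mathfrak{D}(\omega))^{s-2}$, while if $s<2$ we have $|f''(x)|\leq K_3 \mathfrak{D}(\omega)^{s-2}$. In either case, dividing by the lower bound on $f'(y)$ produces
$$\left|\frac{f''(x)}{f'(y)}\right|\leq \frac{K_3\max\{1,2^{s-2}\}\,\mathfrak{D}(\omega)^{s-2}}{K_2\,\mathfrak{D}(\omega)^{s-1}}=\frac{C_1}{\mathfrak{D}(\omega)},$$
with $C_1:=K_3 K_2^{-1}\max\{1,2^{s-2}\}$, and the supremum over $x,y\in\omega$ is absorbed into this constant. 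The main obstacle is simply the verification that the upper bound $|f''|\lesssim |x|^{s-2}$ can be chosen uniformly in $a\in E$; the geometric estimate itself is then a one-line power-counting.
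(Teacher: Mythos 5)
Your proof is correct and follows essentially the same route as the paper's: both rest on the power-law bounds $f'(x)\asymp|x|^{s-1}$ and $|f''(x)|\lesssim|x|^{s-2}$ near the critical point, together with the observation that $|x|$ is comparable to $\mathfrak{D}(\omega)$ on all of $\omega$ (the paper merely organizes this as a case split according to whether $\omega$ meets a fixed small neighborhood of $0$, bounding $f'$ and $f''$ above and below by constants outside that neighborhood). If anything you are slightly more careful than the paper, which writes $|f''(x)|\leq K_1'\mathfrak{D}(\omega)^{s-2}$ without addressing the sign of $s-2$, whereas you treat the monotonicity of $t\mapsto t^{s-2}$ in both cases; like the paper, you take the upper bound on $|f''|$ as an input from the construction of the Rovella family rather than deriving it from the stated conditions B1)--B4), C1)--C4).
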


\begin{proof}
Outside some small neighborhood $(-\epsilon_0,\epsilon_0)$ of the origin, both $f''$, $f'$ are bounded from above and below by constants which depend only on the map and $\epsilon_0$. Then the %statement in the 
result follows immediately if $\omega\cap (-\epsilon_0,\epsilon_0)=\emptyset$. Inside a neighborhood $(-2\epsilon_0,2\epsilon_0)$ we have 
$$K_2|x|^{s-1}\leq f'(x)\leq K_1|x|^{s-1},$$
$$K'_2|x|^{s-2}\leq |f''(x)|\leq K'_1|x|^{s-2}.$$
Now suppose that $\omega\cap (-\epsilon_0,\epsilon_0)\neq\emptyset$. Since $|\omega|\leq\mathfrak{D}(\omega)<\epsilon_0$, then we have $\omega\subset (-2\epsilon_0,2\epsilon_0)$ and
$$|f''(x)|\leq K'_1 \mathfrak{D}^{s-2}(\omega),\hspace{1 cm}\mbox{for~all}~x\in\omega;$$
$$|f'(y)|\geq K_2 \mathfrak{D}^{s-1}(\omega),\hspace{1 cm}\mbox{for~all}~y\in\omega.$$
Hence, by taking quotient and supremum, the result follows.
\end{proof}

\begin{proposition}[Bounded Distortion]\label{distortion}
There is  $C_2>0$ (independent of $\Delta$) such that for all $\omega\in\mathcal{P}_n$ with $f^{n+1}(\omega)\subset U_{\Delta_0}$, where $\Delta_0=\left\lceil{\beta\frac {s+2}{\beta+\log\lambda_c}\Delta}\right\rceil$, and all $x, y\in \omega$
$$\frac{(f^{n+1})'(x)}{(f^{n+1})'(y)}\leq C_2.$$
\end{proposition}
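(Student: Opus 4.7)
The plan is to pursue the standard sum-of-logs strategy, using Lemma~\ref{lem2} to reduce matters to an estimate on $\sum |f^j(\omega)|/\mathfrak{D}(f^j(\omega))$, and then exploiting the partition structure together with Lemmas~\ref{lemm} and \ref{lem3} to sum this geometrically.

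First I would take $x,y\in\omega$ and write
\[
\log\frac{(f^{n+1})'(x)}{(f^{n+1})'(y)}
\le \sum_{j=0}^{n}\log\left(1+\frac{|f'(f^j(x))-f'(f^j(y))|}{|f'(f^j(y))|}\right),
\]
and bound each term via the mean value theorem by $|f''(\zeta_j)|/|f'(\xi_j)|\cdot|f^j(x)-f^j(y)|$ for some $\zeta_j,\xi_j\in f^j(\omega)$. By the construction of $\mathcal{P}_n$, the interval $f^j(\omega)$ is contained in some $I^+_{m,k}$ at returns and in a bounded neighborhood of the critical orbit during bound periods, so in either case $|f^j(\omega)|\le \mathfrak{D}(f^j(\omega))$. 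Lemma~\ref{lem2} then yields
\[
\log\frac{(f^{n+1})'(x)}{(f^{n+1})'(y)}\le C_1\sum_{j=0}^{n}\frac{|f^j(\omega)|}{\mathfrak{D}(f^j(\omega))}.
\]

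Next I would split $\{0,\ldots,n\}$ into bound iterates, return times $z_1<z_2<\cdots<z_{\gamma}$ (essential and inessential), and free iterates. For a bound period $\{z_i+1,\ldots,z_i+p_i-1\}$ following a return with host $I^+_{m_i,k_i}$, Lemma~\ref{lemm}(1) shows that $f^{j-z_i}|_{f^{z_i}(\omega)}$ has bounded distortion for $j$ inside this period; combined with the definition of bound period and the geometric decay $e^{-\beta(j-z_i)}$ of $|f^j(\omega)-f^{j-z_i-1}(\pm 1)|$, each bound period contributes a sum absorbed into a constant times $|f^{z_i}(\omega)|/\mathfrak{D}(f^{z_i}(\omega))\le C/m_i^2$, which uses that $f^{z_i}(\omega)\subset I^+_{m_i,k_i}$ so that $|f^{z_i}(\omega)|\lesssim e^{-|m_i|}/m_i^2$ while $\mathfrak{D}(f^{z_i}(\omega))\gtrsim e^{-|m_i|}$.

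For the return contribution I would sum $\sum_i C/m_i^2$ using Lemma~\ref{lem3}(1), which says $|f^{z_{i+1}}(\omega)|\ge 2|f^{z_i}(\omega)|$ between consecutive returns. Going backwards from the last return $z_\gamma$ gives geometric control $|f^{z_{\gamma-\ell}}(\omega)|\le 2^{-\ell}|f^{z_\gamma}(\omega)|$, so for each depth $m$ only finitely many returns can have host $I^+_{m,\cdot}$, with contributions summing to a geometric series in $\ell$; summing then over $m\ge\Delta$ yields a uniform bound via $\sum_m 1/m^2$. For the free iterates, expansion from Lemma~\ref{lem1} together with the fact that $\mathfrak{D}(f^j(\omega))\ge e^{-\Delta}$ at free times makes the size $|f^j(\omega)|$ shrink geometrically as we look backward from the next return, giving another geometric sum.

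The delicate step and main obstacle is the tail from the last return $z_\gamma$ up to $n$: here the hypothesis $f^{n+1}(\omega)\subset U_{\Delta_0}$ with $\Delta_0=\lceil \beta\tfrac{s+2}{\beta+\log\lambda_c}\Delta\rceil$ becomes essential. Lemma~\ref{lem3}(3) (applied with $f^n(\omega)\subset U_1$, so a fortiori in $U_{\Delta_0}$) gives $|f^n(\omega)|\ge \lambda^q e^{(1-\beta\frac{s+2}{\beta+\log\lambda_c})|m_\gamma|}|f^{z_\gamma}(\omega)|$, so the chosen $\Delta_0$ is exactly large enough to force $|f^j(\omega)|/\mathfrak{D}(f^j(\omega))$ to shrink geometrically backward from $j=n+1$ to $j=z_\gamma+p_{\gamma}$, yielding the last geometric piece. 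Combining the three uniformly bounded geometric sums gives a constant $C_2$ independent of $\Delta$ (and of $\omega$ and $n$), proving the proposition by exponentiating.
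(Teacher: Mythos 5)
Your overall strategy---reducing via Lemma~\ref{lem2} to bounding $\sum_j |f^j(\omega)|/\mathfrak{D}(f^j(\omega))$, splitting the orbit into free, return and bound blocks, bounding each block's contribution by a constant times $|f^{z_i}(\omega)|e^{|m_i|}\approx |f^{z_i}(\omega)|/\mathfrak{D}(f^{z_i}(\omega))$, and then summing over all returns of a fixed depth by the doubling property of Lemma~\ref{lem3}(1) followed by $\sum_m m^{-2}<\infty$---is exactly the paper's argument up to the last return. The gap is in your treatment of the final stretch $[z_\gamma+p_\gamma,\,n]$. You claim that the hypothesis $f^{n+1}(\omega)\subset U_{\Delta_0}$ forces $|f^j(\omega)|/\mathfrak{D}(f^j(\omega))$ to shrink geometrically backward with a $\Delta$-independent constant, but $\Delta_0=\lceil\beta\tfrac{s+2}{\beta+\log\lambda_c}\Delta\rceil$ is much \emph{smaller} than $\Delta$ (recall $\beta=s\alpha$ with $\alpha$ small), so $U_{\Delta_0}$ is a much \emph{larger} neighbourhood than $U_\Delta=(-\delta,\delta)$. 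On the final free block one only has $\mathfrak{D}(f^j(\omega))\gtrsim\delta=e^{-\Delta}$, while backward contraction from time $n+1$ only gives $|f^j(\omega)|\lesssim\lambda^{j-n}e^{-\Delta_0}$; the resulting sum is of order $e^{\Delta-\Delta_0}$, which is unbounded as $\Delta\to\infty$. So your argument does not yield a constant $C_2$ independent of $\Delta$, which is precisely what the Proposition asserts (and what its later application in Proposition~\ref{proess} requires).

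The paper closes this with a case distinction you are missing. If $|f^{n+1}(\omega)|\le\frac{\lambda}{e}\delta$, the geometric argument does work, since then $|f^j(\omega)|\le\lambda^{j-n}\delta$ and Lemma~\ref{lem2} applies with $\mathfrak{D}(f^j(\omega))\ge\delta$. Otherwise, the number of final iterates $j$ with $|f^j(\omega)|>\frac{\lambda}{e}\delta$ is bounded by $\frac{1-\log\delta}{\log\lambda}$, and over this block the distortion is controlled not by the $f''/f'$ estimate but by the bounded distortion of the unperturbed Misiurewicz map $f_0$ with negative Schwarzian derivative (\cite[Proposition V.6.1]{demelo}), giving a constant $K$ independent of $\Delta$, which transfers to $f_a$ for $a$ close to $0$ by continuity of the derivatives. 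The hypothesis $f^{n+1}(\omega)\subset U_{\Delta_0}$ enters in that second case---to confine the image of this tail block near the origin so that the Misiurewicz estimate applies---not to make the final images small relative to $\delta$. Without this extra idea the proof is incomplete.
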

\begin{proof}
Consider the set of return times and host indices of $\omega$ as $R_{n+1}(\omega)=\{z_0,\ldots, z_{\gamma(n+1)}\}$ and $Q_{n+1}(\omega)=\{(m_1,k_1),\ldots, (m_{\gamma(n+1)},k_{\gamma(n+1)})\}$ respectively. Let $\omega_j=f^j(\omega)$, $p_j=p(\omega_{z_j})$, $x_j=f^j(x)$ and $y_j=f^j(y)$.
Using the chain rule, we write
$$
\frac{(f^{n+1})'(x)}{(f^{n+1})'(y)}=\prod\limits_{j=0}^{n}\frac{f'(x_j)}{f'(y_j)}\leq \prod\limits_{j=0}^{n}\left(1+\left|\frac{f'(x_j)-f'(y_j)}{f'(y_j)}\right|\right).
$$
According to this, the proof is completed by showing that
$$
S=\sum\limits_{j=0}^{n}\left|\frac{f'(x_j)-f'(y_j)}{f'(y_j)}\right|=\sum\limits_{j=0}^{n}|A_j|
$$
is uniformly bounded. By the Mean Value Theorem we also have
$|f'(x_j)-f'(y_j)|\leq|f''(\xi)||x_j-y_j|$ for some $\xi$ between $x_j$ and $y_j$. Therefore, 
$$
|A_j|\leq\sup\limits_{\xi_1, \xi_2\in\omega_j}\left|\frac{f''(\xi_1)}{f'(\xi_2)}\right||x_j-y_j|.
$$

We first estimate the contribution of the free period between $z_{i-1}$ and $z_i$ for the sum $S$:
%$$
%\sum\limits_{j=z_{q-1}+p_{q-1}}^{z_q-1}|A_j|=\sum\limits_{j=z_{q-1}+p_{q-1}}^{z_q-1}\sup\limits_{z, y\in\omega_j}\left|\frac{f''(z)}{f'(y)}\right||x_j-y_j|
%$$
Without loss of generality, assume that $m_i>0$. Since $\omega_{z_i}\subset I_{m_i}^+\subset [e^{-|m_i|-2}, e^{-|m_i|+1}),$
\begin{equation}\label{1.1}
|\omega_{z_i}|\leq 3\frac{e^{-|m_i|+1}-e^{-|m_i|}}{(|m_i|-1)^2}=3\frac{e-1}{(|m_i|-1)^2}e^{-|m_i|}.
\end{equation}
For $j\in[z_{i-1}+p_{i-1}, z_i-1]$,

\begin{align*}
|\omega_{z_i}|\geq&|f^{z_i-j}(x_j)-f^{z_i-j}(y_j)|\\
=&|(f^{z_i-j})'(\xi)||x_j-y_j|,\qquad\mbox{for some }\xi\mbox{ between }x_j\mbox{ and }y_j\\
\geq&\lambda^{z_i-j}|x_j-y_j|,\qquad\mbox{by Lemma }\ref{lem1}.
\end{align*}
Moreover, $\omega_j$ stays out of $(-\delta, \delta)$, which implies $\delta<\mathfrak{D}(\omega_j)$. Then the hypothesis of Lemma~\ref{lem2} holds:
$$
|\omega_j|\leq\lambda^{j-z_i}|\omega_{z_i}|\leq3\lambda^{j-z_i}\frac{e-1}{(|m_i|-1)^2}e^{-|m_i|}\ll\delta<\mathfrak{D}(\omega_j).
$$
%$\sup\limits_{z, y\in\omega_j}\left|\frac{f''(z)}{f'(y)}\right|\leq C_1$, see proof of Lemma \ref{lem2}. 
This gives
$$
\sum\limits_{j=z_{i-1}+p_{i-1}}^{z_i-1}|A_j|=\sum\limits_{j=z_{i-1}+p_{i-1}}^{z_i-1}\sup\limits_{\xi_1, \xi_2\in\omega_j}\left|\frac{f''(\xi_1)}{f'(\xi_2)}\right||x_j-y_j|
\leq\frac{ C_1|\omega_{z_i}|}{\mathfrak{D}(\omega_j)}\sum\limits_{j=z_{i-1}+p_{i-1}}^{z_i-1}\lambda^{j-z_i}\leq \frac{C_1}{\lambda-1}|\omega_{z_i}|e^{|m_i|}.
$$

%Without loss of generality, we assume that $m_i>0$. Since $\omega_{z_i}\subset I_{m_i}^+\subset [e^{-|m_i|-2}, e^{-|m_i|+1}),$
%\begin{equation}\label{1.1}
%|\omega_{z_i}|\leq 3\frac{e^{-|m_i|+1}-e^{-|m_i|}}{(|m_i|-1)^2}=3\frac{e^2(e-1)}{(|m_i|-1)^2}e^{-|m_i|-2},
%\end{equation}
%and by taking $\Delta$ large enough, 
We have seen in \eqref{1.1} that the hypothesis of Lemma \ref{lem2} for $\omega_{z_i}$ is also satisfied ($e^{-|m_i|-2}\leq\mathfrak{D}(\omega_{z_i})$).
Then the contribution of the return time $z_i$ is:
$$
|A_{z_i}|\leq\sup\limits_{\xi_1, \xi_2\in\omega_j}\left|\frac{f''(\xi_1)}{f'(\xi_2)}\right||\omega_{z_i}|\leq \frac{C_1}{\mathfrak{D}(\omega_{z_i})}|\omega_{z_i}|\leq C_1|\omega_{z_i}|e^{|m_i|+2}=e^2C_1|\omega_{z_i}|e^{|m_i|}.
$$

Now we compute the contribution of the bound period, $j\in(z_i,z_i+p_i)$. Let's take $\omega^*=(0,e^{-|m_i|+1}]$ and $\omega_j^*=f^j(\omega^*)$. As we are in the bound period, $|\omega_j^*|\leq e^{-\beta j}$ and 
\begin{align*}
|\omega_j^*|=&|(f^{j-1})'(\xi)||\omega_1^*|\qquad\mbox{for some }\xi\in\omega^*_1\subset[-1, f(e^{-|m_i|+1})]\\
\geq& \frac 1{C_0}|(f^{j-1})'(-1)||\omega_1^*|\qquad\mbox{Lemma} \ref{lemm}.
\end{align*}
%for some $z\in\omega^*_1\subset[-1, f(e^{-|m_q|+1})]$ and inequality is obtained by Lemma \ref{lemm}. 
Therefore, $|(f^{j-1})'(-1)|\leq C_0\frac{|\omega_j^*|}{|\omega_1^*|}\leq C_0\frac{e^{-\beta j}}{|\omega_1^*|}$. Now

\begin{align*}
|f^j(\omega_{z_i})|=&|\omega_{z_i}||(f^j)'(\xi)|,\qquad\mbox{for some }\xi\in\omega_{z_i}\\
=&|\omega_{z_i}||(f^{j-1})'(f(\xi))||f'(\xi)|\\
\leq& C_0|\omega_{z_i}||f'(\xi)||(f^{j-1})'(-1)|\\
\leq& C_0^2 |\omega_{z_i}|K_1e^{(1-|m_i|)(s-1)}\frac{e^{-\beta j}}{|\omega_1^*|},
\end{align*}
but $\xi\in\omega_{z_i}\subset [e^{-|m_i|-2}, e^{-|m_i|+1})$, which implies that $|f'(\xi)|\leq K_1 e^{(1-|m_i|)(s-1)}$.
And 
$$|\omega_1^*|=f\left(e^{-|m_i|+1}\right)+1\geq \frac{K_2}{s}e^{(1-|m_i|)s}.$$
Consequently,
\begin{equation}\label{1.3}
|f^j(\omega_{z_i})|\leq C_0^2K_1\frac s{K_2}|\omega_{z_i}|e^{(1-|m_i|)(s-1)}e^{-\beta j}e^{(|m_i|-1)s}=C'|\omega_{z_i}|e^{-\beta j}e^{|m_i|}.
\end{equation}
Combining \eqref{1.1} and \eqref{1.3},
\begin{equation*}
|f^j(\omega_{z_i})|\leq 3C'\frac{e^2(e-1)}{(|m_i|-1)^2}e^{-|m_i|-2}e^{-\beta j}e^{|m_i|}\leq 3C'\frac{e-1}{(|m_i|-1)^2}e^{-\beta j}.
\end{equation*}
%Taking $\Delta$ large enough, $3C_2\frac{e-1}{(|m_i|-1)^2}e^{-\beta j}\leq e^{-\alpha j}-e^{-\beta j}$.
If $\Delta$ is large enough such that  $3C'\frac{e-1}{(\Delta-1)^2}+1\leq e^{\beta-\alpha}$, then
\begin{equation}\label{444}
3C'\frac{e-1}{(|m_i|-1)^2}+1\leq 3C'\frac{e-1}{(\Delta-1)^2}+1\leq e^{\beta-\alpha}\leq e^{(\beta-\alpha)j}. 
\end{equation}
On the other hand, for $j$ in the bound period and $x\in\omega_{z_i}$,
$$
e^{-\beta j}\geq |f^{j-1}(-1)-f^{j}(x)|\geq |f^{j-1}(-1)|-|f^{j}(x)|\geq e^{-\alpha j}-|f^{j}(x)|,
$$
which implies that %Since we are in the bound period $|f^j(\omega_p)|\leq e^{-\beta j}$ and due to (BA) we have $|f^{j-1}(c_1^{\pm})| \geq e^{-\alpha j}$. These two inequalities imply 
$\mathfrak{D}(f^j(\omega_{z_i}))\geq e^{-\alpha j}- e^{-\beta j}$. Thus by \eqref{444}, we have
$$\mathfrak{D}(f^j(\omega_{z_i}))\geq e^{-\alpha j}- e^{-\beta j}\geq 3C'\frac{e-1}{(|m_i|-1)^2}e^{-\beta j}\geq |f^j(\omega_{z_i})|.$$
Using Lemma \ref{lem2},
$$\sup\limits_{\xi_1, \xi_2\in f^j(\omega_{z_i})}\left|\frac{f''(\xi_1)}{f'(\xi_2)}\right|\leq\frac{C_1}{\mathfrak{D}(f^j(\omega_{z_i}))}\leq\frac{C_1}{e^{-\alpha j}-e^{-\beta j}}.$$
Now we are ready to compute the contribution of the bound period
\begin{align*}
\sum\limits_{j=z_i+1}^{z_i+p_i-1}|A_j|%=\sum\limits_{j=1}^{p_q}|A_{z_q+j}|
\leq&\sum\limits_{j=1}^{p_i-1}|f^j(\omega_{z_i})|\sup\limits_{\xi_1, \xi_2\in f^j(\omega_{z_i})}\left|\frac{f''(\xi_1)}{f'(\xi_2)}\right|\\
\leq& \sum\limits_{j=1}^{p_i-1} C_1C'|\omega_{z_i}|e^{|m_i|}\frac{e^{-\beta j}}{e^{-\alpha j}-e^{-\beta j}}\\
\leq& C_1C'\frac{e^{\alpha-\beta}}{(1-e^{\alpha-\beta})^2}|\omega_{z_i}|e^{|m_i|}.
\end{align*}
If we assume that $n<z_{\gamma}+p_{\gamma}$, then we split the sum $\sum\limits_{j=0}^{n}|A_j|$ into three sums according to free period, return time and bound period: 
\begin{align*}
\sum\limits_{j=0}^{n}|A_j|=&\sum\limits_{i=1}^{\gamma(n+1)}\left(\sum\limits_{j=z_{i-1}+p_{i-1}}^{z_{i}-1}|A_j|+|A_{z_{i}}|+\sum\limits_{j=z_{i}+1}^{z_{i}+p_{i}-1}|A_j|\right)\\
\leq&\sum\limits_{i=1}^{\gamma(n+1)}\left(\frac{C_1}{\lambda-1}e^{|m_{i}|}|\omega_{z_i}|+e^2C_1|\omega_{z_i}|e^{|m_{i}|}+C_1C'\frac{e^{\alpha-\beta}}{(1-e^{\alpha-\beta})^2}|\omega_{z_i}|e^{|m_{i}|}\right)\\
\leq& C''\sum\limits_{i=1}^{\gamma(n+1)}|\omega_{z_i}|e^{|m_{i}|}\\
\leq& C''\sum\limits_{R\geq\Delta}e^{R}\sum\limits_{i:|m_{i}|=R}|\omega_{z_i}|\\
\leq& 2\frac{e-1}{e}C''\sum\limits_{R\geq\Delta}e^{R}\frac{e^{-R}}{R^2}
\end{align*}
We have last inequality, because by the first part of Lemma~\ref{lem3}, if $\{z_{i_j}: j=1,\ldots, r\}$ is a set of returns with depth $R$ that is in an increasing order, then $$\sum\limits_{i:|m_{i}|=R}|\omega_{z_i}|=\sum\limits_{j=1}^{r}|\omega_{z_{i_j}}|\leq\sum\limits_{j=1}^r2^{-r+j}|\omega_{z_{i_r}}|\leq 2|\omega_{z_{i_r}}|\leq2\frac{e^{-R}-e^{-R-1}}{R^2}.$$

Finally, if $n+1\geq z_{\gamma}+p_{\gamma}$, then we take care of the last piece of free period, i.e. $j\in[z_{\gamma}+p_{\gamma}, n]$. In this period of time we have 
\begin{align*}
|\omega_{n+1}|&=(f^{n+1-j})'(\xi)|\omega_j|\quad\mbox{for some }\xi\in\omega_j\\
&\geq\frac 1e\lambda^{n+1-j}|\omega_j|.
\end{align*}
First suppose that $|\omega_{n+1}|\leq \frac \lambda e\delta$. Then $|\omega_j|\leq \lambda^{j-n}\delta\leq \delta$. Since during the last free period $\omega_j$ is outside of $(-\delta,\delta)$, we have $\mathfrak{D}(\omega_j)\geq\delta$. Therefore $|\omega_j|\leq\mathfrak{D}(\omega_j)$.  Lemma \ref{lem2} gives that $$\sup\limits_{\xi_1,\xi_2}\left|\frac{f''(\xi_1)}{f'(\xi_2)}\right|\leq\frac{C_1}{\mathfrak{D}(\omega_j)},$$ and so
$$
\sum\limits_{j=z_{\gamma}+p_{\gamma}}^{n}\sup\limits_{\xi_1,\xi_2\in\omega_j}\left|\frac{f''(\xi_1)}{f'(\xi_2)}\right||x_j-y_j|\leq\sum\limits_{j=z_{\gamma}+p_{\gamma}}^{n}\frac{C_1}{\mathfrak{D}(\omega_j)}\delta\lambda^{j-n}\leq\frac{\lambda C_1}{\lambda-1}.
$$
Now suppose that $|\omega_{n+1}|>\frac\lambda e\delta$. Let $q\geq z_{\gamma}+p_{\gamma}$ be the last integer such that $|\omega_q|\leq\frac \lambda e\delta$. Then
$$
\frac 1e\geq|\omega_{n+1}|\geq\frac 1e\lambda^{n-q}|\omega_{q+1}|>\frac {1}{e^2}\lambda^{n+1-q}\delta,
$$
which implies that $n-q<\frac{1-\log\delta}{\log\lambda}-1$. On the other hand, for parameter value $a$ sufficiently close to 0 we have
$$
\sup\limits_{\xi\in I\setminus\{0\}}|f_a^j(\xi)-f_0^j(\xi)|\leq e^{-\Delta_0}\quad\mbox{for all }j=0, 1, \ldots, n-q.
$$
So, $f_0^{n-q}(\omega_q)$ is contained in a small neighborhood of 0 and since $f_0$ is a Misiurewicz map with negative schwartzian derivative, there exists a constant $K<\infty$ independent of $\Delta$ such that 
$
\frac{(f_0^{n-q})'(x_q)}{(f_0^{n-q})'(y_q)}\leq K
$; see
\cite[Proposition V.6.1]{demelo}. By knowing that $n-q$ is bounded and derivatives of $f_a$ depend continuously on $a$, we may take $a$ sufficiently close to $0$ in order to have 
$$
\frac{(f_a^{n-q})'(x_q)}{(f_a^{n-q})'(y_q)}\leq 2K.
$$
\end{proof}

\section{Return depths}\label{se.retrndeoths}
In this section we look more closely at return depths. This provides the first basic idea for the proof of our main theorem: 
the total sum of the depth of bound and inessential returns is proportional to the depth of the essential return preceding them. %the depth of an inessential or bound return is smaller than the depth of the essential return preceding them. 
Though we follow the same strategy of \cite{Fr06}, we include detailed proofs for the sake of completeness, as the conditions in our maps and some estimates are different.

As it was seen, there are three types or returns:  essential, bound and inessential, which are denoted by $t$, $u$ and $v$ respectively. Each essential return %at time $t_i$ up to time $n$ with depth $\eta_i$ for $i=1,\ldots, s_n$ 
might be followed by some bound returns %$u_{i,j}$ for $j=1,\ldots, u$ 
and inessential return. %$v_{i,j}$ for $j=1,\ldots, v$. 
We proceed to show that the depth of an inessential return is not greater that the depth of an inessential return that precedes it.  
\begin{lemma}\label{lem4}
Suppose $t$ is an essential return time for $\omega\in\mathcal{P}_{t}$ with $I_{m,k}\subset f^{t}(\omega)\subset I^+_{m,k}$. The depth of each inessential return $v$ before the next essential return is not grater than $\eta$.
\end{lemma}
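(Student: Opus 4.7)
The plan is a direct length comparison. At the essential return time $t$ the image $f^{t}(\omega)$ contains all of $I_{m,k}$, and between $t$ and the subsequent inessential return $v$ the length $|f^{j}(\omega)|$ can only grow: by Lemma~\ref{lem3}(1), at each intermediate return time the length at least doubles (for $\Delta$ large enough), which in particular rules out erosion. Consequently
$$
|f^{v}(\omega)| \;\geq\; |f^{t}(\omega)| \;\geq\; |I_{m,k}| \;=\; \frac{e-1}{e\, m^{2}}\,e^{-|m|}.
$$

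On the other hand, the defining feature of an inessential return is that $f^{v}(\omega)$ fits inside the host $I^{+}_{m_v,k_v}$, which is the union of three equal-length pieces of $I_{m_v}$, so
$$
|f^{v}(\omega)| \;\leq\; |I^{+}_{m_v,k_v}| \;\leq\; \frac{3(e-1)}{e\, m_v^{2}}\,e^{-|m_v|}.
$$
Chaining the two bounds gives $m_v^{2}\,e^{-|m_v|} \leq 3\, m^{2}\, e^{-|m|}$, equivalently
$$
|m_v| - |m| \;\leq\; \log\!\left(\frac{3\, m^{2}}{m_v^{2}}\right).
$$
If $|m_v| \leq |m|$ the conclusion is immediate; if instead $|m_v| \geq |m|$, then $m_v^{2} \geq m^{2}$ and the right-hand side is bounded by $\log 3$, so $|m_v| \leq |m|+1$. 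In either case $|m_v|$ does not exceed $|m|$ by more than a universal additive constant, which is the bound $\eta$.

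The only delicate ingredient is the monotone growth inequality $|f^{v}(\omega)| \geq |f^{t}(\omega)|$ across the intermediate bound periods and inessential returns. This is precisely what Lemma~\ref{lem3}(1) is designed to provide: the expansion acquired during free and bound periods together with the size loss from the derivative factor at the return is already packaged into a net doubling, so nothing extra has to be proved. Once this is in hand the argument reduces to the elementary comparison above.
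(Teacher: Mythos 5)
Your approach is the same as the paper's: bound $|f^{v}(\omega)|$ from below by the length of the essential host interval $I_{m,k}$ via Lemma~\ref{lem3}(1), bound it from above by the length of the inessential host interval, and compare $e^{-|m|}/m^{2}$ with $e^{-|m_v|}/m_v^{2}$. Two remarks. First, the ``$\eta$'' in the statement is a typo for $m$: the intended conclusion (see the sentence preceding the lemma and the way it is used in Section~\ref{se.measure.tail}) is $|m_v|\leq |m|$, with no additive constant. By weakening the doubling of Lemma~\ref{lem3}(1) to mere non-decrease you only reach $|m_v|\leq |m|+1$. The paper keeps the factor $2$: chaining Lemma~\ref{lem3}(1) over the intermediate returns gives $|f^{v}(\omega)|\geq 2|f^{t}(\omega)|\geq 2|I_{m,k}|$, and then the assumption $|m_v|>|m|$ forces $e^{|m_v|-|m|}\leq \tfrac{C}{2}\,(m/m_v)^{2}<e$ (with $C$ the ratio $|I^{+}_{m_v,k_v}|/|I_{m_v,k_v}|$), a contradiction; so you should retain the $2$ to get the sharp bound. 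Second, two small inaccuracies that do not affect the outcome: the displayed chained inequality has its exponents swapped (it should read $m_v^{2}e^{-|m|}\leq 3\,m^{2}e^{-|m_v|}$, and your subsequent ``equivalently'' line is the correct consequence of the corrected version); and $I^{+}_{m_v,k_v}$ is not three \emph{equal-length} pieces, since the adjacent intervals of the same type may lie in $I_{m_v\pm 1}$, so the constant $3$ should be replaced by something like $1+e+e^{-1}$ --- still small enough for the argument with the factor $2$ to close.
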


\begin{proof}
It follows from the first item of  Lemma \ref{lem3}  that
$$
|f^v(\omega)|\geq 2|f^t(\omega)|\geq 2|I_{m,k}|.
$$
But since $v$ is an inessential return time, $f^v(\omega)\subseteq I_{m_1,k_1}$ for some $m_1\geq\Delta$ and $1\leq k_1\leq m_1^2$.
Therefor, $|I_{m,k}|\leq |I_{m_1,k_1}|$, which implies that $m>m_1$.
\end{proof}
The same conclusion can be drawn for bound returns.
\begin{lemma}\label{lem5}
Suppose $t$ is an essential or an inessential return time for $\omega$ with $f^t(\omega)\subset I^+_{m,k}$ and $p$ is the bound period associated to this return. Then for $x\in\omega$, if the orbit of $x$ returns to $U_{\Delta}$ between $t$ and $t+p$ the depth is not grater than $m$.
\end{lemma}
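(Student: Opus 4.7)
The approach is to control $|f^{t+j}(x)|$ from below during the bound period using only the definition of bound period together with the basic assumption (BA), and then to compare this lower bound with the upper bound on the bound-period length provided by Lemma~\ref{lemm}(2).

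First I would fix $j$ with $1\le j\le p-1$ and $x\in\omega$. By the very definition of bound period, $|f^{t+j}(x)-f^{j-1}(\pm 1)|\le e^{-\beta j}$ with the sign of $\pm 1$ determined by $\mathrm{sgn}(m)$. Combined with (BA), namely $|f^{j-1}(\pm 1)|\ge e^{-\alpha j}$, the triangle inequality gives
$$
|f^{t+j}(x)|\;\ge\; e^{-\alpha j}-e^{-\beta j}\;=\;e^{-\alpha j}\bigl(1-e^{-(\beta-\alpha)j}\bigr)\;\ge\;e^{-\alpha j}\bigl(1-e^{-(\beta-\alpha)}\bigr),
$$
where the last step uses $j\ge 1$ and $\beta=s\alpha>\alpha$. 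If the orbit of $x$ actually returns to $U_\Delta$ at time $t+j$ with host interval $I_{m',k'}^+$, then $|f^{t+j}(x)|<e^{-|m'|+1}$, so taking logarithms in the previous display yields
$$
|m'|\;\le\;1+\alpha j-\log\!\bigl(1-e^{-(\beta-\alpha)}\bigr).
$$

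Next I would substitute the upper bound $j<p\le p(m)\le \tfrac{s+1}{\beta+\log\lambda_c}\,|m|$ from Lemma~\ref{lemm}(2), together with $\beta=s\alpha$, to obtain
$$
|m'|\;\le\;\frac{(s+1)\alpha}{s\alpha+\log\lambda_c}\,|m|\;+\;C_\alpha,
$$
with $C_\alpha:=1-\log\bigl(1-e^{-(\beta-\alpha)}\bigr)$ depending only on $\alpha$ and $s$. Because $\alpha$ in C3) is taken small, the coefficient multiplying $|m|$ can be made strictly less than $1$; and since $|m|\ge\Delta$ with $\Delta$ chosen large (depending on $\alpha$), the additive constant $C_\alpha$ is absorbed, giving $|m'|\le|m|$, which is the conclusion.

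The argument is essentially routine once the defining inequality of the bound period and (BA) are combined, so there is no substantial obstacle. The one delicate point, and the only thing worth writing out carefully, is the bookkeeping of constants: verifying that the smallness of $\alpha$ required here is compatible with the choice of $\alpha$ in C3) and the largeness of $\Delta$ needed in the construction of the partitions $\mathcal{P}_n$, so that no circular dependence arises among the parameters.
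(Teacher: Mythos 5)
Your proposal is correct and follows essentially the same route as the paper: combine the defining inequality of the bound period with (BA) and the triangle inequality to bound $|f^{t+j}(x)|$ from below, then use the upper bound $p\le\frac{s+1}{\beta+\log\lambda_c}|m|$ from Lemma~\ref{lemm}(2) together with the smallness of $\alpha$ and the largeness of $\Delta$ to conclude. The only difference is cosmetic — you carry out the comparison after taking logarithms, whereas the paper chains the inequalities multiplicatively — and your attention to the non-circularity of the parameter choices matches what the paper implicitly assumes.
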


\begin{proof}
There is no loss of generality in assuming $m>0$. Since we are in the bound period
$$
|f^{j-1}(-1)|-|f^{t+j}(x)|\leq |f^{j-1}(-1)-f^{t+j}(x)|
\leq e^{-\beta j},\qquad\mbox{for }j=1,...,p-1.
$$
Accordingly
\begin{align*}
|f^{t+j}(x)|\geq& |f^{j-1}(-1)|-e^{-\beta j}\\
\geq& e^{-\alpha j}-e^{-\beta j}=e^{-\alpha j}(1-e^{(\alpha-\beta)j}),\hspace{.5cm}(BA)\\
\geq& e^{-\alpha p}(1-e^{\alpha-\beta}),\hspace{3.5cm}j<p,~\alpha-\beta<0\\
\geq& e^{-\alpha\frac{s+1}{\beta+\log\lambda_c}|m|}(1-e^{\alpha-\beta}),\hspace{2.3cm}p\leq\frac{s+1}{\beta+\log \lambda_c}|m|\\
\geq& e^{-\alpha\frac{s+2}{\beta+\log\lambda_c}|m|},\hspace{4.1cm}1-e^{\alpha-\beta}>e^{-\alpha\frac{1}{\beta+\log\lambda_c}|m|}\\
\geq& e^{-|m|}.
\end{align*}
The last inequality holds by taking $\alpha$ small such that $\alpha\frac{s+2}{\beta+\log\lambda_c}<1$.
\end{proof}

In the proof of the next lemma we shall use the \emph{free period assumption} (FA) in  \cite[page~255]{rovella}: parameters $a\in E$ have been chosen in such a way that if
$$H^{\pm}_n=\#\{i\leq n: i\mbox{ is a free time for }\pm 1\}$$ 
then
$$H^{\pm}_n\geq (1-\epsilon)n,\qquad\mbox{for all }n\geq 1,$$
for some small positive constant $\epsilon$ (not depending on $a$).

\begin{lemma}\label{1lem}
There is  $C_3>0$ such that if $t$ is a return time for $\omega\in\mathcal{P}_t$ with  $I^+_{m,k}$ the host interval,  $p$ is the bound period associated with this return, and $S$ is the sum of the depth of all bound returns between $t$ and $t+p$ plus the depth of the return $t$ that originated the bound period $p$, then $S\leq C_3|m|$.
\end{lemma}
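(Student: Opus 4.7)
\medskip

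My plan is to transfer the sum of depths of the bound returns of $\omega$ to the critical orbit and then exploit the free period assumption (FA) to control the latter.

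First, I would relate the depth $m_i$ of each bound return of $\omega$, occurring at time $t+j_i$ with $1\leq j_i<p$, to the position of the critical orbit at time $j_i-1$. The definition of the bound period gives
$$
\bigl|f^{t+j_i}(x)-f^{j_i-1}(\pm 1)\bigr|\leq e^{-\beta j_i},\qquad x\in\omega.
$$
Since $f^{t+j_i}(\omega)$ has a point in $U_\Delta$ at depth $m_i$, we have $|f^{t+j_i}(x)|\leq e^{-|m_i|+1}$, so
$$
|f^{j_i-1}(\pm 1)|\leq e^{-|m_i|+1}+e^{-\beta j_i}.
$$
Provided $\beta j_i\geq |m_i|$, the first term dominates and $f^{j_i-1}(\pm 1)$ lies in $U_\Delta$ at depth at least $|m_i|-O(1)$. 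In the complementary regime, $j_i<|m_i|/\beta$ forces $|m_i|>\beta j_i\geq\beta$, but one also has $|f^{j_i-1}(\pm 1)|\geq e^{-\alpha j_i}$ by (BA); combined with the inequality above, this yields a direct (though crude) bound $|m_i|\leq (\alpha/\beta)|m_i|+O(1)$, which is impossible for $\Delta$ large since $\alpha<\beta$. Consequently every bound return of $\omega$ produces a corresponding \emph{non-free} iterate of the critical orbit at comparable depth, and moreover the indices $j_i-1$ are pairwise distinct.

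Second, I would collect the returns of the critical orbit $\pm 1$ occurring in $[1,p]$, say at times $\tau_1<\dots<\tau_r$ with depths $\mu_1,\dots,\mu_r$. By the above step, every bound return of $\omega$ either coincides (up to index shift) with one of the $\tau_j$ at depth $\mu_j\geq |m_i|-O(1)$, or falls inside a bound period of the critical orbit, which is itself nested by the same shadowing argument applied one level down. Either way, this gives
$$
\sum_{i=1}^{\ell} m_i\;\leq\;\sum_{j=1}^{r}\mu_j\;+\;O(\ell).
$$

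Third, I would use (FA) to estimate the right-hand side. Since return times and their associated bound periods are non-free for $\pm 1$, the total number of non-free iterates in $[1,p]$ is at most $\epsilon p$. By Lemma~\ref{lemm}(2), a return of the critical orbit at depth $\mu_j$ triggers a bound period of length $p_j\geq \frac{s}{\beta+\log 4}\mu_j-K$, so
$$
\frac{s}{\beta+\log 4}\sum_{j=1}^{r}\mu_j-Kr\;\leq\;\sum_{j=1}^{r}p_j\;\leq\;\epsilon p.
$$
Since each $p_j\geq p(\Delta)\geq c\Delta$, one gets $r\leq p/(c\Delta)$, and Lemma~\ref{lemm}(2) again gives $p\leq \frac{s+1}{\beta+\log\lambda_c}|m|$. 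Combining everything,
$$
\sum_{j=1}^{r}\mu_j\;\leq\;\frac{\beta+\log 4}{s}\bigl(\epsilon p+Kr\bigr)\;\leq\;C'|m|,
$$
so $\sum_i m_i\leq C''|m|$. Adding the depth $|m|$ of the initial return yields $S\leq C_3|m|$ as required.

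The main obstacle is the shadowing argument in the first step: ensuring that the correspondence $m_i\mapsto \mu_{j_i-1}$ is well-defined with bounded loss and that one does not accidentally overcount when several bound returns of $\omega$ cluster inside a single nested bound period of the critical orbit. Handling this cleanly may require a short inductive argument on the nesting depth (or, as in \cite{Fr06}, invoking the sub-exponential decay of $p_j$ relative to $\mu_j$ to absorb the constants into $C_3$).
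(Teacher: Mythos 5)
Your overall mechanism is the same as the paper's: control the sum of bound-return depths by combining the lower bound $p_j\geq \frac{s}{\beta+\log 4}|m_j|-K$ from Lemma~\ref{lemm}(2) with the free period assumption (FA), which caps the total time spent in bound periods by $\epsilon p$, and then close the loop with $p\leq\frac{s+1}{\beta+\log\lambda_c}|m|$. The difference is one of bookkeeping: you route everything through the critical orbit (shadowing transfers each bound return of $\omega$ to a deep iterate of $\pm 1$, and (FA) is then applied to $\pm 1$), whereas the paper applies (FA) directly to the orbit of $x\in\omega$ during $[t,t+p]$ and never leaves that orbit.

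The genuine gap is exactly where you flag it, and it is not optional. Your Step~2 inequality $\sum_i m_i\leq\sum_j\mu_j+O(\ell)$ is false as stated unless the $\tau_j$ range over \emph{all} returns of the critical orbit, including nested bound returns; but then the bound periods $p_j$ overlap and the disjointness needed for $\sum_j p_j\leq\epsilon p$ in Step~3 fails. If instead the $\tau_j$ are only the top-level returns (so their bound periods are pairwise disjoint and Step~3 is valid), then a bound return of $\omega$ sitting two or more bindings deep has no counterpart among the $\mu_j$. The paper resolves this by classifying bound returns by their \emph{level} (the number of simultaneously active bindings) and running your Step~3 estimate once per level: level-$i$ returns contribute at most $\left(\epsilon\,\frac{s+1}{\beta+\log\lambda_c}\,\frac{\beta+\log 4}{s-1}\right)^i|m|$, and the total is a convergent geometric series precisely because $\epsilon$ in (FA) is small enough to make this ratio less than $1$. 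That smallness condition is the quantitative heart of the lemma and appears nowhere in your sketch; without it the ``short inductive argument on the nesting depth'' you invoke does not close. (A minor point: your Step~1 case analysis in the regime $j_i<|m_i|/\beta$ does not actually produce the claimed contradiction --- the inequality $e^{-\alpha j_i}\leq e^{-|m_i|+1}+e^{-\beta j_i}$ only bounds $j_i$, not $|m_i|$ --- but this step is not needed once the level-by-level induction is in place, since Lemma~\ref{lem5} already supplies the only fact used there, namely that bound-return depths do not exceed the depth of the originating return.)
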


\begin{proof}
Suppose $u_1$ is the first time between $t$ and $t+p$ that the orbit of $x\in\omega$ enters $U_{\Delta}$. Since the bound period at time $t$ is not finished yet we say at time $u_1$ there is just one active binding to the critical point and we call $u_1$ is a bound return of level 1. At time $u_1$ the orbit of $x$ establishes a new binding to the critical point which ends before $t+p$ that we denote by $p_1$.  
During the period from $u_1$ to $u_1+p_1$ a new return may happen and its level is at least 2 because there are at least two active bindings: the one initiated at $t$ and the one initiated at $u_1$. But new bound returns of level 1 may occur after $u_1 + p_1$. In this way we define the notion of bound return of level $i$ at which the orbit has already initiated exactly $i$ bindings to the critical point and all of them are still active. By active we mean that the respective bound periods have not finished yet. 

Here we use free period assumption %($\mathbf{FA}$) 
%
%which asserts the fact that the time spent by the critical orbits in bound periods in a period of length $n\in\mathbb{N}$ is at most $\epsilon n$. Thus 
which gives that from $t$ to $t + p$, the orbit of $x\in\omega$ can spend at most the fraction of time $\epsilon p$ in bound periods. Now suppose $n$ denotes the number of bound returns of level 1 at $u_1,\ldots, u_n$ with depths $m_1,\ldots, m_n$ and bound periods $p_1,\ldots, p_n$. Then Lemma \ref{lemm} applies:
$$\frac{s-1}{\beta+\log 4}\sum\limits_{i=1}^{n}|m_i|\leq\sum\limits_{i=1}^{n} p_i\leq \epsilon p\leq \epsilon\frac{s+1}{\beta+\log\lambda_c}|m|.$$
Hence
$$\sum\limits_{i=1}^{n} |m_i|\leq \epsilon \frac{s+1}{\beta+\log\lambda_c}\frac{\beta+\log 4}{s-1}|m|.$$
Now let $n_i$ denote the number of bound returns of level 2 within the $i$-th bound period of level 1 at $u_{i1},\ldots, u_{in_i}$ with depths $m_{i1},\ldots, m_{in_i}$ and bound periods $p_{i1},\ldots, p_{in_i}$. Then
$$\frac{s-1}{\beta+\log 4}\sum\limits_{j=1}^{n_i}|m_{ij}|\leq\sum\limits_{j=1}^{n_i} p_{ij}\leq \epsilon p_i\leq \epsilon\frac{s+1}{\beta+\log\lambda_c}|m_i|.$$
Consequently
$$\sum\limits_{i=1}^{n}\sum\limits_{j=1}^{n_i}|m_{i,j}|\leq \sum\limits_{i}^{n}\epsilon \frac{s+1}{\beta+\log\lambda_c}\frac{\beta+\log 4}{s-1}|m_i|\leq (\epsilon \frac{s+1}{\beta+\log\lambda_c}\frac{\beta+\log 4}{s-1})^2|m|.$$
By induction we have
$$S\leq \sum\limits_{i=0}^{\infty}\left(\alpha \frac{s+1}{\beta+\log\lambda_c}\frac{\beta+\log 4}{s-1}\right)^i|m|\leq C_3|m|,$$
for $$\alpha \frac{s+1}{\beta+\log\lambda_c}\frac{\beta+\log 4}{s-1}<1\quad\mbox{and}\quad C_3=\left({1-\alpha \frac{s+1}{\beta+\log\lambda_c}\frac{\beta+\log 4}{s-1}}\right)^{-1}.$$
\end{proof}

\begin{lemma}\label{2lem}
%with $I_{\eta,k}\subset f^t(\omega)\subset I^+_{\eta,k}$, and $p$ be the associated bound period. 
There is $C_4>0$ such that if  $t$ is an essential return time for $\omega\in\mathcal{P}_t$ with host interval $I^+_{m,k}$ and
$S$ is the sum of the depths of all free inessential returns before the next essential return, then $S\leq C_4|m|$.
\end{lemma}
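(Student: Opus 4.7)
The plan is to chain the geometric growth estimate in Lemma~\ref{lem3}(1) across the whole block of inessential returns and then compare the final image size to the initial image at $t$. I enumerate the free inessential returns $v_1 < \cdots < v_n$ in $(t, t')$, where $t'$ is the next essential return, and write $|m_i|$ for the depth of $v_i$. Setting $v_0 := t$ and $|m_0| := |m|$ for bookkeeping, I abbreviate $c_0 := \beta\frac{s+2}{\beta+\log\lambda_c}$; this is strictly less than $1$ once $\alpha$ (hence $\beta = s\alpha$) is chosen small enough, which is already part of the Rovella parameter selection.

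The key computation is a telescoping. Applying Lemma~\ref{lem3}(1) to the pair $(v_i, v_{i+1})$ for each $0 \le i \le n-1$ gives
\[
|f^{v_{i+1}}(\omega)| \ge \lambda^{q_i}\, e^{(1-c_0)|m_i|}\, |f^{v_i}(\omega)|,
\]
where $q_i \ge 0$ is the length of the free piece between the end of $v_i$'s bound period and $v_{i+1}$ (nonnegative because in the partition construction no return is recorded during a bound period). Multiplying these inequalities for $i=0,\dots,n-1$ and discarding the harmless factors $\lambda^{q_i}\ge 1$ yields
\[
|f^{v_n}(\omega)| \ge e^{(1-c_0)\sum_{i=0}^{n-1}|m_i|}\, |f^{t}(\omega)|.
\]
I then bound the two sides independently: since $t$ is essential with host $I^+_{m,k}$, the image $f^t(\omega)$ covers $I_{m,k}$, giving $|f^t(\omega)| \ge c\, e^{-|m|}/m^2$; and since $v_n$ is an inessential return, $f^{v_n}(\omega) \subset I^+_{m_n,k_n} \subset U_{\Delta-1}$, so $|f^{v_n}(\omega)| \le K$ for an absolute constant $K$. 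Taking logarithms,
\[
(1-c_0)\sum_{i=0}^{n-1} |m_i| \le |m| + 2\log|m| + \const.
\]
For $\Delta$ large, $2\log|m|$ is dominated by an arbitrarily small multiple of $|m|$, so $\sum_{i=0}^{n-1}|m_i| \le C'|m|$ with $C'$ close to $1/(1-c_0)$.

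To finish, I subtract off the $i=0$ term $|m_0| = |m|$ and add back the depth of the last inessential return $|m_n|$, which satisfies $|m_n| \le |m|$ by Lemma~\ref{lem4}. This gives $S = \sum_{i=1}^{n} |m_i| \le C_4 |m|$ for a suitable constant $C_4$. The only substantive thing to watch is that the per-step expansion factor in Lemma~\ref{lem3}(1) really exceeds $1$, i.e.\ that $c_0 < 1$; since this is already built into conditions (C1)--(C3), no new obstacle appears and the proof is essentially a bookkeeping computation.
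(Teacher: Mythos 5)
Your argument is correct and follows essentially the same route as the paper: both chain the per-return growth estimate of Lemma~\ref{lem3} telescopically over the block of inessential returns and compare the result with a trivial upper bound on the final image length, so that the sum of depths is controlled by the logarithm of the initial length, i.e.\ by $|m|$. The only (cosmetic) difference is that you anchor the chain at $|f^t(\omega)|\ge |I_{m,k}|$ and stop at $v_n$, repairing the resulting index shift with Lemma~\ref{lem4}, whereas the paper starts from the absolute lower bound of Lemma~\ref{lem3}(2b) and runs the product one step further, to the next essential return situation.
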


\begin{proof}
Suppose that $n$ is the number of inessential returns before the next essential return situation with time occurrence $v_1, \ldots, v_n$, depths $m_1,\ldots, m_n$ and bound periods $p_1,\ldots, p_n$. Also denote by $v_{n+1}$ the next essential return situation. Let $\omega_j= f^{v_j}(\omega)$ for $j=1,\ldots,n+1$. We get by Lemma~\ref{lem3}
$$
|\omega_1|\geq
\lambda^{q_0}e^{-\beta\frac{s+3}{\beta+\log\lambda_c}|m|}\quad\mbox{and}\quad\frac{|\omega_{j+1}|}{|\omega_j|}\geq \lambda^{q_j}e^{(1-\beta\frac{s+2}{\beta+\log\lambda_c})|m_j|},
$$
where $q_j=v_{j+1}-(v_j+p_j)$ for $j=1,...,n$. Using equality 
$$
|\omega_{n+1}|=|\omega_1|\prod\limits_{j=1}^{n}\frac{|\omega_{j+1}|}{|\omega_j|}
$$
we have 
$$
\lambda^{\sum\limits_{j=0}^{n}q_j}e^{-\beta\frac{s+3}{\beta+\log\lambda_c}|m|}e^{\sum\limits_{j=1}^{n}(1-\beta\frac{s+2}{\beta+\log\lambda_c})|m_j|}\leq |\omega_{n+1}|\leq e.
$$
Therefore
$$
\sum\limits_{j=1}^{n}\left(1-\beta\frac{s+2}{\beta+\log\lambda_c}\right)|m_j|\leq \beta\frac{s+3}{\beta+\log\lambda_c}|m|+1-\log\lambda\sum\limits_{j=0}^{n}q_j\leq \beta\frac{s+3}{\beta+\log\lambda_c}|m|+1,
$$
which easily gives the desired conclusion.
\end{proof}

\section{Probability of essential returns with a certain depth}
  
In the previous section we studied the depth of returns and we saw that only essential returns matter. Now we proceed with the study of second basic idea for the proof of our main theorem: 
the chance of occurring very deep essential returns is very small. %In fact, the probability of an essential return with depth $\rho$ is less than $e^{-\tau\rho}$ for some $\tau$. 
The main ingredient of the proof is bounded distortion.  Again we follow the same strategy of \cite{Fr06}.  

For each $x\in I$ and $n\in \mathbb{N}$ there is an unique $\omega\in\mathcal{P}_n$ such that $x\in\omega$. Now let $r_n(x)$ be the number of essential return situations of $\omega$ between $1$ and $n$, $s_n(x)$ the number of those essential return situations which are actual essential return times, $d_n(x)$ the number of those essential returns which have deep essential return with depth above threshold $\Theta \geq \Delta$ whose upper bound is $\frac{\beta+\log 4}{s-1}\frac{n}{\Theta}$ (each return will be followed by a bound period of length greater than $\frac{\beta+\log 4}{s-1}\frac{1}{\Theta}$, Lemma~\ref{lemm}). But the essential return situation is a chopping time and it can be a return time or an escape time for every chopping component, so $r_n(x)-s_n(x)$ is the exact number of escaping times of $\omega$.

Given an integer $d$ with $$0\leq d\leq \frac{\beta+\log 4}{s-1}\frac{n}{\Theta},$$ 
an integer $r$ with $d\leq r\leq n$ and $d$ integers $m_1,\ldots, m_d\ge\Theta$, we define
$$A^{r,d}_{m_1,\ldots,m_d}(n)=
\left\{
         \begin{array}{lll}
           
           x\in I:\parbox{0.55\columnwidth}{$r_n(x)=r$, $d_n(x)=d$, and the depth of the $j$-th deep essential return is $m_j$ for $j\in\{1,\dots,d\}$}
           
         \end{array}
\right\}$$
\begin{proposition}\label{proess}
We have 
$$
\Big|A^{r,d}_{m_1,\ldots,m_d}(n)\Big|\leq\binom{r}{d} \exp\left(-\left(1-\beta\frac{s+5}{\beta+\log\lambda_c}\right)\sum\limits_{j=1}^{d}m_j\right).$$
\end{proposition}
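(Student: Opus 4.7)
The plan is to bound $|A^{r,d}_{m_1,\ldots,m_d}(n)|$ by summing the Lebesgue lengths of the partition elements $\omega\in\mathcal{P}_n$ whose history realizes the prescribed return data, and then to dominate these sums by combining bounded distortion (Proposition~\ref{distortion}), the growth/chopping estimates of Lemma~\ref{lem3}, and a combinatorial count of admissible configurations. Each relevant $\omega$ is described by the times $z_1<\cdots<z_r\leq n$ of its essential return situations, the host data $(m'_i,k'_i)$ at each of them, and a choice of $d$ positions $1\leq j_1<\cdots<j_d\leq r$ telling which of the $r$ essential return situations realize the prescribed deep depths $m_1,\ldots,m_d$. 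The number of such position choices is precisely $\binom{r}{d}$, which accounts for the combinatorial factor in the statement.

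For a fixed configuration I would bound $|\omega|$ as follows. Iterating the first item of Lemma~\ref{lem3} between consecutive essential return situations yields
$$
|f^{z_r}(\omega)|\;\geq\;|f^{z_1}(\omega)|\prod_{i=1}^{r-1}\lambda^{q_i}e^{\left(1-\beta\frac{s+2}{\beta+\log\lambda_c}\right)|m'_i|},
$$
while at the final essential return $|f^{z_r}(\omega)|\leq|I^+_{m'_r,k'_r}|\leq 3(e-1)e^{-|m'_r|}/|m'_r|^2$. Transporting this estimate back to the domain by Proposition~\ref{distortion} produces a per-element length bound in which the only exponentially large contributions come from the deep essential returns, namely a factor of at most
$$
\frac{1}{m_l^2}\,e^{-\left(1-\beta\frac{s+3}{\beta+\log\lambda_c}\right)m_l}
$$
for each deep return of depth $m_l$; the contributions of shallow essential returns, bound returns, and intervening free periods stay uniformly bounded thanks to Lemmas~\ref{1lem} and~\ref{2lem} together with the free period assumption.

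Summing over configurations I would then: (i) sum over the hosts $k_{j_l}\in\{1,\ldots,m_l^2\}$ attached to each deep return, which costs a factor $m_l^2$ per deep return and cancels the $1/m_l^2$ produced above; (ii) sum over the data and precise times of the remaining (shallow, inessential, bound and escape) returns using geometrically convergent series, absorbing the resulting polynomial factors in the $m_l$'s into the exponent at the price of weakening the coefficient from $\beta\frac{s+3}{\beta+\log\lambda_c}$ to $\beta\frac{s+5}{\beta+\log\lambda_c}$; and (iii) multiply by the combinatorial factor $\binom{r}{d}$ produced above. This yields exactly the bound claimed in the proposition.

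The main obstacle is the per-element length estimate: one must interleave the growth estimates of Lemma~\ref{lem3} with the distortion estimate of Proposition~\ref{distortion} across returns of mixed types (deep essential, shallow essential, inessential, bound, and escape), keeping track of the bound periods and of the precise conditions under which Proposition~\ref{distortion} applies, and verify that the total contribution of the non-deep events is indeed dominated by the exponentially decaying factors coming from the deep returns. The margin between $s+3$ and $s+5$ in the exponent is precisely what is needed to absorb both the host multiplicity $m_l^2$ and the combinatorial cost of locating and specifying the non-deep essential return data.
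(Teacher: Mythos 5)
Your overall skeleton (bounded distortion at deep returns, growth from Lemma~\ref{lem3}, a factor $\binom{r}{d}$ from choosing which essential return situations are deep, and using the margin $s+3\to s+5$ to absorb constants) matches the paper, but your mechanism for handling everything that is \emph{not} a deep return is where the argument breaks. The paper never counts configurations at all: it builds nested sets $V(0)\supseteq V(1)\supseteq\cdots\supseteq V(r)$ by taking, at each non-deep essential return situation, the \emph{union} over all children (escaping components, shallow hosts, etc.) intersected with $f^{-t_i}(I\setminus U_\Theta)$, and uses only the trivial inclusion $|V(i)|\le|V(i-1)|$ there. The distortion estimate is invoked solely at the $d$ deep situations, in the form $|V(u_j)|/|V(u_j-1)|\le 2C_2\,e^{-m_j}/|f^{t_{u_j-1}}(\hat\omega_{u_j-1})|$ with $\hat\omega_{u_j-1}=\omega_{u_j-1}\cap f^{-t_{u_j}}(U_{\Delta_0})$ (so that Proposition~\ref{distortion}, which requires the image to lie in $U_{\Delta_0}$, actually applies --- a hypothesis your ``transport back to the domain'' step silently ignores), and the denominator is bounded below by $e^{-\beta\frac{s+3}{\beta+\log\lambda_c}m_{j-1}}$ via Lemma~\ref{lem3}(3b) or Lemma~\ref{lemi}. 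This produces a bound with no dependence on $n$, on the return times, or on any non-deep data.

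Your ``count admissible configurations $\times$ per-element length bound'' substitute does not obviously close. First, you propose to ``sum over the data and precise times of the remaining (shallow, inessential, bound and escape) returns using geometrically convergent series''; but a sum over times up to $n$ produces factors growing with $n$, while the target bound $\binom{r}{d}e^{-(1-B')\sum m_j}$ can be as small as $e^{-(1-B')d\Theta}$ independently of $n$ --- there is no room to absorb such factors, and no geometric decay is available in the time variable. (The correct observation, which you do not make, is that an element of $\mathcal P_n$ is determined by its chopping itinerary, so no sum over times is needed.) Second, escape situations carry no exponential decay in depth at all, and the two escaping components can each be a macroscopic fraction of their parent; bounding each child by the maximal child length and multiplying by the number of choices double-counts and threatens a factor $2^{r-d}$, which is not dominated by $\binom{r}{d}$. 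Making this precise essentially forces you to replace ``count $\times$ max length'' by ``sum of lengths of disjoint children $\le$ length of parent'', i.e.\ to reinvent the paper's $V(i)$ construction. Finally, Lemmas~\ref{1lem} and~\ref{2lem} bound \emph{depths} of bound and inessential returns, not measures; they play no role in this proposition (they enter only later, in the slow-recurrence estimate), so they cannot be what keeps the non-deep contributions ``uniformly bounded''.
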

\begin{proof}
Take $n\in\mathbb{N}$ and $\omega_0\in\mathcal{P}_0$. For every $\omega\in\mathcal{P}_n$ with $\omega\subseteq\omega_0$ and $r_n(\omega)=r$, let $\omega_i$ indicate the element of the partition $\mathcal{P}_{t_i}$ containing $\omega$ where $t_i$ is the $i$-th return situation. We have $0\leq t_1\leq t_2\leq\ldots\leq t_r\leq n$ and $\omega_0\supseteq\omega_1\supseteq\ldots\supseteq\omega_r=\omega$.

For each $i\in\{0,\ldots,n\}$ we define
$$
\mathcal{Q}_i=\{\omega_i: \omega\in\mathcal{P}_n\mbox{ and }\omega\subseteq\omega_0\mbox{ with }r_n(\omega)=r\}.
$$
Fix $d$ integers $0\leq u_1\leq u_2\leq\ldots\leq u_d\leq r$ with $u_j$ indicating that the $j$-th deep essential return occurs in the $u_j$-th essential return situation, i.e. $t_{u_j}$ is the $j$-th deep essential return time. Now we just consider those elements of the partition $\mathcal{P}_n$ which are subsets of $\omega_0$ with $r$ times essential return situations and at its $u_j$-th essential return situation its $j$-th deep essential return time occurs with depth $m_j$. And to do that, set $V(0)=\omega_0$. For $i\leq r$ we define $V(i)$ recursively. Suppose that $V(i-1)$ is already defined. If $u_{j-1}<i<u_j$, we set
$$V(i)=\bigcup\limits_{\omega\in\mathcal{Q}_i}\omega\cap f^{-t_i}(I\setminus U_{\Theta})\cap V(i-1),$$
and if $i=u_j$ we set
$$V(i)=\bigcup\limits_{\omega\in\mathcal{Q}_i}\omega\cap f^{-t_i}(I_{m_j}\cap I_{-m_j})\cap V(i-1).$$
Observe that for every $i\in\{1,\ldots,r\}$ we have $\frac{|V(i)|}{|V(i-1)|}\leq 1$, but we find a better estimate for $\frac{|V(u_j)|}{|V(u_j-1)|}$. Take $\omega_{u_j}\in V(u_j)\cap\mathcal{Q}_{u_j}$ and $\omega_{u_j-1}\in V(u_j-1)\cap\mathcal{Q}_{u_j-1}$. %set $\omega_{u_j}^*=\omega_{u_j}\cap V(u_j)$. 
We consider two situations depending on whether $t_{u_j-1}$ is an escaping situation or an essential return.
\begin{enumerate} 
\item First suppose that $t_{u_j-1}$ is an essential return with depth $m$. Then 
\begin{align*}
\frac{\left|\omega_{u_j}\right|}{\left|\omega_{u_j-1}\right|}&\leq \frac{\left|\omega_{u_j}\right|}{\left|\hat{\omega}_{u_j-1}\right|},\quad\mbox{where }\hat{\omega}_{u_j-1}=\omega_{u_j-1}\cap f^{-t_{u_j}}(U_{\Delta_0})\mbox{ and }\Delta_0=\left\lceil\beta\frac{s+2}{\beta+\log\lambda_c}\Delta\right\rceil\\
&\leq C_2\frac{\left|f^{t_{u_j}}(\omega_{u_j})\right|}{\left|f^{t_{u_j-1}}(\hat{\omega}_{u_j-1})\right|},\quad\mbox{by Mean Value Theorem and Proposition }\ref{distortion}\\
&\leq C_2\frac{2e^{-m_j}}{\left|f^{t_{u_j-1}}(\hat{\omega}_{u_j-1})\right|},\quad%\mbox{by Lemma }\ref{lem3},\mbox{ part 3b, and  
\mbox{by definition of }\omega_{u_j}.
\end{align*}
we consider two cases,
\begin{enumerate}
\item if $\hat{\omega}_{u_i-1}=\omega_{u_i-1}$, then by Lemma \ref{lem3} part (3b) %and knowing that $t_{r_i-1}$ is an essential return time we have 
$$
|f^{t_{u_i}}(\hat{\omega}_{u_i-1})|\geq e^{-\beta\frac{s+3}{\beta+\log\lambda_c}m}.
$$
\item if $\hat{\omega}_{u_i-1}\neq\omega_{u_i-1}$, then $f^{t_{u_i}}(\hat{\omega}_{u_i-1})$ has a point outside $U_{\Delta_0}$. Since we are assuming $\omega_{u_i}\neq 0$ and $\omega_{u_i}\subseteq\hat{\omega}_{u_i-1}$,% and $r_i$ is a essential return time,
 therefore $f^{t_{u_i}}(\hat{\omega}_{u_i-1})$ has a point inside $U_{\Delta}$ and then
\begin{equation*}
|f^{t_{u_i}}(\hat{\omega}_{u_i-1})|\geq e^{-\beta\frac{s+2}{\beta+\log\lambda_c}\Delta}-e^{-\Delta}\geq e^{-\beta\frac{s+3}{\beta+\log\lambda_c}\Delta}.
\end{equation*}
On the other hand, we have $m\geq\Delta$ which implies $e^{-\beta\frac{s+3}{\beta+\log\lambda_c}\Delta}\geq e^{-\beta\frac{s+3}{\beta+\log\lambda_c}m}$. Hence
$$
|f^{t_{u_i}}(\hat{\omega}_{u_i-1})|\geq e^{-\beta\frac{s+3}{\beta+\log\lambda_c}m}.
$$
\end{enumerate}
Consequently, in both cases we have 
$$
\frac{|\omega_{u_i}|}{|\omega_{u_i-1}|}\leq C_2\frac{2e^{-\rho_i}}{e^{-\beta\frac{s+3}{\beta+\log\lambda_c}m}}.
$$
Note that when $u_{j}-1=u_{j-1}$, then $m=m_{u_{j-1}}\geq\Theta$. On the other hand, if $u_j-1>u_{j-1}$, then $t_{u_j-1}$ is an essential return with depth $m\leq\Theta\leq m_{j-1}$. In both cases
\begin{equation}\label{eq110}
\frac{\left|\omega_{u_j}\right|}{\left|\omega_{u_j-1}\right|}\leq 2C_2\frac{e^{-m_j}}{e^{-\beta\frac{s+3}{\beta+\log\lambda_c}m_{j-1}}}~.
\end{equation}
\item now suppose $t_{r_i-1}$ is an escape situation. We have a same estimate as in \eqref{eq110}. % before, i.e. $$\frac{|\omega_{r_i}|}{|\omega_{r_i-1}|}\leq C\frac{2e^{-\rho_i}}{e^{-5\beta\eta}}.$$
We only use Lemma \ref{lemi} instead of Lemma \ref{lem3} in (a).
\end{enumerate}
Now it follows that
\begin{align*}
|V(u_j)|&=\sum\limits_{\omega_{u_j}\in\mathcal{Q}_{u_j}}\frac{|\omega_{u_j}|}{|\omega_{u_j-1}|}|\omega_{u_j-1}|\\
&\leq 2C_2e^{-m_j}e^{\beta\frac{s+3}{\beta+\log\lambda_c}m_{j-1}}\sum\limits_{\omega_{u_j}\in\mathcal{Q}_{u_j}}|\omega_{u_j-1}|\\
&\leq 2C_2e^{-m_j}e^{\beta\frac{s+3}{\beta+\log\lambda_c}m_{j-1}}|V(u_j-1)|.
\end{align*}
Therefore we have
\begin{equation}\label{eq220}
|V(u)|=(2C_2)^d \exp\left(-\left(1-\beta\frac{s+3}{\beta+\log\lambda_c}\right)\sum\limits_{j=1}^{d}m_j\right)e^{\beta\frac{s+3}{\beta+\log\lambda_c}m_0}|V(0)|,
\end{equation}
where $m_0=0$ if $\omega_0$ be $(\delta, 1]$ or $[-1,-\delta)$, and $m_0=|\eta|$ if $\omega_0=I_{\eta, K}$ for some $|\eta|\geq\Delta$ and $k\in\{1,2,\ldots,\eta^2\}$. %In the two first cases $\rho_0$ is $0$ and for the last case $\rho_0=|\eta_0|$. 
Let $B=\beta\frac{s+3}{\beta+\log\lambda_c}$, and from \eqref{eq220} we see that 
\begin{align*}
\Big|A^{r,d}_{m_1,\ldots,m_d}(n)\Big|&\leq (2C_2)^d\binom{r}{d} \exp\left(-(1-B)\sum\limits_{j=1}^{d}m_j\right)\sum\limits_{\omega_0\in\mathcal{P}_0}e^{B m_0}|\omega_0|\\
&\leq(2C_2)^d\binom{r}{d} \exp\left(-(1-B)\sum\limits_{j=1}^{d}m_j\right)\left(2(1-\delta)+\sum\limits_{|m_0|\geq\Delta}e^{B|m_0|}e^{-|m_0|}\right)\\
&\leq 3(2C_2)^d\binom{r}{d} \exp\left(-(1-B)\sum\limits_{j=1}^{d}m_j\right)\\
&\leq \binom{r}{d} \exp\left(-\left(1-\beta\frac{s+5}{\beta+\log\lambda_c}\right)\sum\limits_{j=1}^{d}m_j\right),
\end{align*}
the last inequality holds since $d\Theta\leq\sum\limits_{j=1}^{d}m_j$ and we can chose $\Theta$ sufficiently large. 
\end{proof}
As a corollary we can find the probability of the event that $j$-th deep essential of its elements reach depth $m$, i.e.
\begin{equation}\label{eq330}
A^{r,d}_{m,j}(n)=
\left\{
         \begin{array}{ll}
           x\in I:\parbox{0.55\columnwidth}{$r_n(x)=r, d_n(x)=d$, and the depth of the $j$-th deep essential return is $m$}
         \end{array}
\right\}.
\end{equation}
%for fixed $n\in\mathbb{N}$, and $0\leq d\leq \frac{\beta+\log 4}{s-1}\frac{n}{\Theta}$, $d\leq u\leq n$, $j\leq d$ and $\rho\geq \Delta$.
\begin{corollary}\label{pro1ess}
If $\Theta$ is large enough, then
$$\left|A^{r,d}_{m,j}(n)\right|\leq \binom{r}{d} e^{-(1-\beta\frac{s+5}{\beta+\log\lambda_c})m}.$$
\end{corollary}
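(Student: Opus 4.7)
The plan is to derive Corollary~\ref{pro1ess} from Proposition~\ref{proess} by partitioning $A^{r,d}_{m,j}(n)$ according to the depths of the remaining $d-1$ deep essential returns and summing the estimate provided by the proposition. More precisely, by definition of the sets in \eqref{eq330} and $A^{r,d}_{m_1,\dots,m_d}(n)$, we have the disjoint decomposition
$$
A^{r,d}_{m,j}(n)=\bigcup_{\substack{m_i\geq\Theta\\ i\neq j}} A^{r,d}_{m_1,\dots,m_{j-1},m,m_{j+1},\dots,m_d}(n),
$$
since every deep essential return by construction has depth at least $\Theta$.

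Applying Proposition~\ref{proess} to each term, pulling out the factor corresponding to the fixed depth $m_j=m$, and writing $\kappa=1-\beta\tfrac{s+5}{\beta+\log\lambda_c}$ for short, we obtain
$$
\bigl|A^{r,d}_{m,j}(n)\bigr|\leq \binom{r}{d}\, e^{-\kappa m}\prod_{i\neq j}\left(\sum_{m_i\geq \Theta} e^{-\kappa m_i}\right).
$$
Provided that $\alpha$ (hence $\beta=s\alpha$) has been chosen small enough so that $\kappa>0$, each of the $d-1$ geometric series is bounded by
$$
\sum_{m_i\geq\Theta}e^{-\kappa m_i}\leq \frac{e^{-\kappa\Theta}}{1-e^{-\kappa}}.
$$
Taking $\Theta$ sufficiently large so that this quantity is at most $1$, the product over $i\neq j$ is itself bounded by $1$, yielding exactly the desired inequality
$$
\bigl|A^{r,d}_{m,j}(n)\bigr|\leq \binom{r}{d}\, e^{-\kappa m}.
$$

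There is essentially no obstacle here beyond checking that $\Theta$ can be chosen independently of $r,d,m,j$ and $n$; this is already guaranteed in the hypotheses of Proposition~\ref{proess} (where the threshold $\Theta\geq\Delta$ was required to be large) and in the standing assumption on the smallness of $\alpha$ made throughout Sections~\ref{se.bounded}--\ref{se.retrndeoths}. The only point worth emphasizing is that the same constant $\kappa$ appearing in Proposition~\ref{proess} is the one that shows up in the final estimate, so no further loss in the exponent is incurred.
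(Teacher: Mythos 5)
Your proposal is correct and follows essentially the same route as the paper: decompose $A^{r,d}_{m,j}(n)$ over the depths of the other $d-1$ deep essential returns, apply Proposition~\ref{proess} termwise, and absorb the resulting $(d-1)$-fold geometric series by taking $\Theta$ large enough that $\sum_{\eta\geq\Theta}e^{-\kappa\eta}\leq 1$. Your added remark that $\kappa>0$ requires $\alpha$ (hence $\beta=s\alpha$) small is a harmless and correct explicitation of a standing assumption.
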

\begin{proof}
Note that $A^{r,d}_{m,j}(n)=\bigcup\limits_{m_i\geq\Theta\atop i\neq j}A^{r,d}_{m_1,\ldots,m_{j-1},m,m_{j+1},\ldots,m_d}(n)$. By Proposition \ref {proess},
\begin{align*}
\Big|A^{r,d}_{m,j}(n)\Big|&\leq \binom{r}{d}e^{-(1-\beta\frac{s+5}{\beta+\log\lambda_c})m}\left(\sum\limits_{\eta=\Theta}^{\infty}e^{-(1-\beta\frac{s+5}{\beta+\log\lambda_c})\eta}\right)^{d-1}\\
&\leq \binom{r}{d}e^{-(1-\beta\frac{s+5}{\beta+\log\lambda_c})m},
\end{align*}
for $\Theta$ large enough such that $\sum\limits_{\eta=\Theta}^{\infty}e^{-(1-\beta\frac{s+5}{\beta+\log\lambda_c})\eta}\leq 1$.
\end{proof}

\section{The measure of the tail set}\label{se.measure.tail}
Here we finish the proof of Theorem~\ref{maintheorem}. First we check the non-uniform expansion and later the slow recurrence to the critical set.

\subsection{Nonuniform expansion}

Assume that $n$ is a fixed large integer. We define
$$\mathsf{E}_1(n)=\{x\in I: \exists i\in\{1,\ldots,n\}~\mbox{such~that}~|f^i(x)|\leq e^{-\alpha n}\}.$$
Take $x\in I\setminus \mathsf{E}_1(n)$ and suppose that $z_1,\ldots,z_{\gamma}$ are return times of $x$, either essential or inessential up to time $n$. Let $p_i$ be the associated bound period originated by return time $z_i$. Set $z_0=0$ and if $|x|\geq\delta$, set $p_0=0$. % as usual if $|x|<\delta$ and take 
We define $q_i=z_{i+1}-(z_i+p_i)$ for $i=0,\ldots,\gamma-1$ and
$$
q_{\gamma}=\left\{
\begin{array}{ll}
                   0&\mbox{if}~n<z_{\gamma}+p_{\gamma},\\
                   n-(z_{\gamma}+p_{\gamma})&\mbox{if}~n\geq z_{\gamma}+p_{\gamma}.
\end{array}
\right.
$$
Take $c=\frac{\log\lambda_c}{s+1}-\beta-s\alpha$. If $n\geq z_{\gamma}+p_{\gamma}$, then by Lemmas \ref{lem1} and \ref{lemm} we get
\begin{align*}
(f^n)'(x)&=\prod\limits_{i=0}^{\gamma}(f^{q_i})'(f^{z_i+p_i}(x))(f^{p_i})'(f^{z_i}(x))\\
&\geq c(\delta) e^{\log\lambda\sum\limits_{i=0}^{\gamma}q_i}e^{(\frac{\log\lambda_c}{s+1}-\beta)\sum\limits_{i=0}^{\gamma}p_i}\\
&\geq c(\delta) e^{(\frac{\log\lambda_c}{s+1}-\beta)n},\quad\log\lambda>\log\lambda_c>\frac{\log\lambda_c}{s+1}-\beta\\
&\geq c(\delta) e^{s\alpha n}e^{c n},\quad n\mbox{ big enough such that }c(\delta) e^{s\alpha n}>1\\
&\geq e^{c n}.
\end{align*}
If $n\leq z_{\gamma}+p_{\gamma}$ then by the same lemmas and the fact that $x\notin \mathsf{E}_1(n)$, it follows 
\begin{align*}
(f^n)'(x)&=f'(f^{z_{\gamma}}(x))(f^{n-z_{\gamma}-1})'(f^{z_{\gamma}}(x))\prod\limits_{i=0}^{\gamma-1}(f^{q_i})'(f^{z_i+p_i}(x))(f^{p_i})'(f^{z_i}(x))\\
&\geq \frac{K_2}{C_0}e^{-\alpha n(s-1)}e^{\log\lambda(n-z_{\gamma}-1)}e^{\log\lambda\sum\limits_{i=0}^{\gamma-1}q_i}e^{(\frac{\log\lambda_c}{s+1}-\beta)\sum\limits_{i=0}^{\gamma-1}p_i},\quad\mbox{Lemmas }\ref{lem1}\mbox{ and }\ref{lemm}\\
&\geq \frac{K_2}{C_0}e^{-\alpha n(s-1)}e^{(\frac{\log\lambda_c}{s+1}-\beta)(n-1)},\quad n\mbox{ large such that }\frac{K_2}{C_0}e^{-\alpha n(s-1)}e^{-\alpha s-c}\geq e^{-\alpha sn}\\
&\geq e^{-\alpha sn}e^{(\alpha s+c)n}\\
&\geq e^{c n}.
\end{align*}
Therefore we have proved that if $x\notin \mathsf{E}_1(n)$, then $(f^n)'(x)\geq e^{c n}$ for some $c>0$.
We will show that
$$|\mathsf{E}_1(n)|\leq e^{-\tau_1 n}, \hspace{1cm}\forall n\geq N_1,$$
for some constant $\tau_1(\alpha,\beta)$ and an integer $N_1(\Delta,\tau_1)$.

We can take $\Theta=\Delta$ in \eqref{eq330} and define
$$A^{r,d}_{m}(n)=
\left\{
         \begin{array}{ll}
           x\in I:\parbox{0.55\columnwidth}{$r_n(x)=r, d_n(x)=d$, and there is an essential return with depth $m$}
         \end{array}
\right\},$$
for fixed $r, d$ ($d\leq r\leq n$) and $m\geq\Delta$, and
$$A_{m}(n)=\Big\{x\in I: \exists t\leq n \mbox{ such that }t\mbox{ is an essential return and }|f^t(x)|\in I_{m}\Big\},$$
for fixed $n$ and $m\geq\Delta$. Since $A^{r,d}_{m}(n)=\bigcup\limits_{j=1}^{d}A^{r,d}_{m,j}(n)$, by Corollary \ref{pro1ess}
\begin{align}\label{1}
\left|A^{r,d}_{m}(n)\right|\leq\sum\limits_{j=1}^{d}\left|A^{r,d}_{m,j}(n)\right|\leq d\binom{r}{d}e^{-(1-\beta\frac{s+5}{\beta+\log\lambda_c})m}.
\end{align}
Since $A_{m}(n)=\bigcup\limits_{d=1}^{\frac{\beta+\log 4}{s-1}\frac{n}{\Delta}}\bigcup\limits_{r=d}^{n}A^{r,d}_{m}(n)$, then by \eqref{1}
\begin{align*}
|A_{m}(n)|&\leq\sum\limits_{d=1}^{\frac{\beta+\log 4}{s-1}\frac{n}{\Delta}}\sum\limits_{r=d}^{n}\left|A^{r,d}_{m}(n)\right|
\leq\sum\limits_{d=1}^{\frac{\beta+\log 4}{s-1}\frac{n}{\Delta}}\sum\limits_{r=d}^{n}d\binom{r}{d}e^{-(1-\beta\frac{s+5}{\beta+\log\lambda_c})m}\\
&\leq e^{-(1-\beta\frac{s+5}{\beta+\log\lambda_c})m}\sum\limits_{d=1}^{\frac{\beta+\log 4}{s-1}\frac{n}{\Delta}}d\sum\limits_{r=d}^{n}\binom{n}{d}
\leq n e^{-(1-\beta\frac{s+5}{\beta+\log\lambda_c})m}\sum\limits_{d=1}^{\frac{\beta+\log 4}{s-1}\frac{n}{\Delta}}d\binom{n}{d}\\
&\leq \frac{n^3}{\Delta}\binom{n}{\frac{\beta+\log 4}{s-1}\frac{n}{\Delta}}e^{-(1-\beta\frac{s+5}{\beta+\log\lambda_c})m}.
\end{align*}
Take $R=\frac{\beta+\log 4}{s-1}\frac{n}{\Delta}$. The Stirling Formula
$$\sqrt{2\pi n}n^n e^{-n}\leq n!\leq\sqrt{2\pi n}n^n e^{-n}\Big(1+\frac{1}{4n}\Big),$$
implies that
$$\binom {n}{R}\leq \const\frac{n^n}{(n-R)^{n-R}(R)^{R}}.$$
So
$$\binom{n}{R}\leq \const\Bigg(\Bigg(1+\frac{\frac{R}{n}}{1-\frac{R}{n}}\Bigg)\Bigg(1+\frac{1-\frac{R}{n}}{\frac{R}{n}}\Bigg)^{\frac{\frac{R}{n}}{1-\frac{R}{n}}}\Bigg)^{(1-\frac{R}{n})n}.
$$
By Taking  $h(\Delta)=(1-\frac{R}{n})\log\Bigg(\Bigg(1+\frac{\frac{R}{n}}{1-\frac{R}{n}}\Bigg)\Bigg(1+\frac{1-\frac{R}{n}}{\frac{R}{n}}\Bigg)^{\frac{\frac{R}{n}}{1-\frac{R}{n}}}\Bigg)$ we have that $h(\Delta)\to 0$ when $\Delta\to +\infty$ and
$$\binom{n}{R}\leq\const~e^{h(\Delta)n}.$$
Since the depths of inessential and bound returns are less than the depth of the essential returns preceding them, for $n$ such that $\alpha n\geq\Delta$
$$\mathsf{E}_1(n)=\Big\{x\in I: |f^i(x)|<e^{-\alpha n}\mbox{ for some }i\in\{1,\cdots,n\}\Big\}\subseteq\bigcup\limits_{m=\alpha n}^{+\infty}A_{m}(n).$$
Let us take $\Delta$ large enough such that $h(\Delta)\leq (1-\beta\frac{s+5}{\beta+\log\lambda_c})\frac{\alpha}{2}$. Then
\begin{align*}
|\mathsf{E}_1(n)|
&\leq \const~\frac{n^3}{\Delta}e^{h(\Delta)n}\sum\limits_{m=\alpha n}^{+\infty}e^{-(1-\beta\frac{s+5}{\beta+\log\lambda_c})m}\\
&\leq \const~\frac{n^3}{\Delta}e^{h(\Delta)n}e^{-(1-\beta\frac{s+5}{\beta+\log\lambda_c})\alpha n}\\
&\leq \const~\frac{n^3}{\Delta}e^{-(1-\beta\frac{s+5}{\beta+\log\lambda_c})\frac{\alpha n}{2}}\\
&\leq \const~\frac{n^3}{\Delta}e^{-2\tau_1 n}\\
&\leq e^{-\tau_1 n},
\end{align*}
where $\tau_1=(1-\beta\frac{s+5}{\beta+\log\lambda_c})\frac{\alpha}{4}$ and $n$ is large enough such that
$$\const~\frac{n^3}{\Delta}e^{-\tau_1 n}\leq 1.$$
Therefore, for large $n$, say $n>N_1$, we have $(f^n)'(x)\geq e^{c n}$ for every $x\in I$ except for the points in $\mathsf{E}_1(n)$. Now we exclude the points which do not verify (\ref{eq:1}), i.e.
$$\mathsf{E}_1=\bigcap\limits_{k\geq N}\bigcup\limits_{n\geq k}\mathsf{E}_1(n).$$
On the other hand, since for every $k\geq N_1$
$$\sum\limits_{n\geq k}|\mathsf{E}_1(n)|\leq \const~e^{-\tau_1 k},$$
Borel-Cantelli Lemma implies that $|\mathsf{E}_1|=0$. As a result, (\ref{eq:1}) holds on the full Lebesgue measure set $I\setminus \mathsf{E}_1$. Note that $\{x\in I: \mathcal{E}(x)>k\}\subseteq \bigcup\limits_{n\geq k}\mathsf{E}_1(n)$. Thus for $k\geq N_1$
$$\Big|\{x\in I: \mathcal{E}(x)>k\}\Big|\leq \const~e^{-\tau_1 k}.$$
Therefore, there exists $C=C(N_1,\tau_1)$ such that for all $n\in\mathbb{N}$
$$\Big|\{x\in I: \mathcal{E}(x)>n\}\Big|\leq Ce^{-\tau_1 n}.$$

\subsection{Slow recurrence to the critical set}
We define for every $x$ and $n$, 
$$T_n(x)=\frac 1n\sum\limits_{j=0}^{n-1}-\log (\d_{\delta}(f^j(x),0)),$$
where $\delta=e^{-\Theta}$. Note that the only points of the orbit of $x$ that contribute to the sum are those with deep return and its depth is above the threshold $\Theta\geq\Delta$. 
According to the basic idea expressed in Section~\ref{se.retrndeoths}, in order to obtain a bound for $T_n$ we only need to find an upper bound for the sum of inessential and bound returns depths occurring between two consecutive essential returns. Using Lemmas \ref{1lem} and \ref{2lem}, it can be seen that if $t$ is an essential return time with depth $|m|$ then sum of its depth and depth of all inessential returns and bound returns before the next essential return is less than $(C_3+C_3C_4)|m|$. %the period of time from $t_i$ to the next essential return situation is bounded by $C|\eta_i|$. 
Thus, if we define $F_n(x)=\sum\limits_{j=1}^{d}m_j$ such that $d$ be the number of essential returns of $x$ with depth above $\Theta$ up to time $n$ and $m_j$'s are their respective depth, then it follows that
\begin{equation}\label{TF}
T_n(x)\leq \frac{C_5}{n}F_n(x),\qquad C_5=C_3+C_3C_4.
\end{equation}
We define for all $n\in\mathbb{N}$,
$$\mathsf{E}_2(n)=\big\{x\in I: T_n(x)>\epsilon\big\}.$$
From (\ref{TF}), it can be concluded that  
$$\big|\mathsf{E}_2(n)\big|\leq\left|\left\{x\in I: F_n(x)\geq\frac{n\epsilon}{C_5}\right\}\right|.$$
In order to complete the proof of Theorem \ref{maintheorem}, we show that
$$\left|\left\{x\in I: F_n(x)\geq\frac{n\epsilon}{C_5}\right\}\right|\leq e^{-\tau_2  n}.$$
\begin{lemma}
Let $0<t\leq(1-\beta\frac{s+5}{\beta+\log\lambda_c})\frac{1}{3}$. For $\Theta$ large enough,% there exits $N$ such that for all $n\geq N$
$$\mathbb{E}\left(e^{tF_n}\right)\leq \const~\frac{n^2}{\Theta}e^{h(\Theta)n},$$
where $\mathbb{E}$ is the mathematical expectation. Moreover $h(\Theta)\to 0$ when $\Theta\to +\infty.$
\end{lemma}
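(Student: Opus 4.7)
The plan is to compute $\mathbb{E}(e^{tF_n})$ by integrating over the strata $A^{r,d}_{m_1,\ldots,m_d}(n)$ and then apply Proposition~\ref{proess} directly. Since $F_n(x)=\sum_{j=1}^{d_n(x)} m_j$ depends on $x$ only through the data $(r_n(x),d_n(x),m_1,\ldots,m_{d_n(x)})$, and since by Lemma~\ref{lemm} we must have $d\leq R:=\frac{\beta+\log 4}{s-1}\frac{n}{\Theta}$ and $r\leq n$, I would write
$$
\mathbb{E}\left(e^{tF_n}\right)=\sum_{r=0}^{n}\sum_{d=0}^{\min(r,R)}\sum_{m_1,\ldots,m_d\ge\Theta}e^{t(m_1+\cdots+m_d)}\,\bigl|A^{r,d}_{m_1,\ldots,m_d}(n)\bigr|.
$$

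Setting $B=\beta\frac{s+5}{\beta+\log\lambda_c}$, Proposition~\ref{proess} immediately gives
$$
\mathbb{E}\left(e^{tF_n}\right)\le\sum_{r=0}^{n}\sum_{d=0}^{\min(r,R)}\binom{r}{d}\left(\sum_{m\ge\Theta}e^{-(1-B-t)m}\right)^{\!d}.
$$
The hypothesis $t\le(1-B)/3$ ensures $1-B-t\ge 2(1-B)/3>0$, so the inner geometric series equals $e^{-(1-B-t)\Theta}/(1-e^{-(1-B-t)})$, which can be made $\le 1$ for $\Theta$ sufficiently large. This is the main technical point: it is exactly the margin $t\le(1-B)/3$ that lets the $d$-fold product be absorbed independently of $d$. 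After this, the tail factor in $\Theta$ disappears from the exponents and we are left with a purely combinatorial estimate.

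It remains to bound $\sum_{r=0}^{n}\sum_{d=0}^{\min(r,R)}\binom{r}{d}$. For $\Theta$ large, $R\le n/2$, so $\binom{r}{d}\le\binom{n}{R}$ for all $d\le R\le r$, hence
$$
\sum_{r=0}^{n}\sum_{d=0}^{\min(r,R)}\binom{r}{d}\le (n+1)(R+1)\binom{n}{R}.
$$
This is precisely the binomial that was already estimated via Stirling's formula in the proof of the bound for $|\mathsf{E}_1(n)|$: one obtains $\binom{n}{R}\le\const\,e^{h(\Theta)n}$ with $h(\Theta)\to 0$ as $\Theta\to+\infty$. Combining everything and using $R=\frac{\beta+\log 4}{s-1}\frac{n}{\Theta}$ yields
$$
\mathbb{E}\left(e^{tF_n}\right)\le \const\,(n+1)(R+1)\,e^{h(\Theta)n}\le\const\,\frac{n^2}{\Theta}\,e^{h(\Theta)n},
$$
as claimed.

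I do not expect any serious obstacle: once the stratification is correctly set up, the proof is a mechanical combination of Proposition~\ref{proess}, a geometric series whose convergence is guaranteed by the assumed range of $t$, and the Stirling-based binomial estimate already used earlier. The only point requiring any care is making sure that ``$\Theta$ large enough'' is chosen so that simultaneously (i) the geometric series over $m\ge\Theta$ is at most one, (ii) $R\le n/2$ so the binomial coefficients are nondecreasing in $d$ on the relevant range, and (iii) the Stirling estimate giving $\binom{n}{R}\le\const\,e^{h(\Theta)n}$ with $h(\Theta)$ arbitrarily small is applicable. All of these are compatible, so a single choice of $\Theta$ works.
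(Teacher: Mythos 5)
Your proof is correct and follows essentially the same route as the paper: stratify $I$ by the sets $A^{r,d}_{m_1,\ldots,m_d}(n)$, apply Proposition~\ref{proess}, use $t\leq(1-\beta\frac{s+5}{\beta+\log\lambda_c})/3$ to absorb the exponential weight, and finish with the Stirling bound on $\binom{n}{R}$. The only difference is that you handle the sum over $(m_1,\ldots,m_d)$ by factoring it into a $d$-fold product of geometric series, whereas the paper groups the multi-indices by their total $Q=\sum_j m_j$ and bounds the number of compositions by $\binom{Q+d-1}{d-1}\leq e^{tQ}$ via a second Stirling estimate; your variant is slightly more direct but not a genuinely different argument.
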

\begin{proof}
\begin{align*}
\mathbb{E}\left(e^{tF_n}\right)&=\mathbb{E}\left(e^{t\sum_{j=1}^{d}m_j}\right)=\sum\limits_{r,d,(m_1,\ldots,m_d)}e^{t\sum_{j=1}^{d}m_j}\Big|A_{m_1,\ldots,m_d}^{r,d}(n)\Big|\\
&\leq \sum\limits_{r,s,(m_1,\ldots,m_n)}e^{t\sum_{j=1}^{d}m_j}\binom{r}{d}e^{-3t\sum_{j=1}^{d}m_j}\\
&\leq \sum\limits_{r,d,Q}\binom{r}{d}\zeta(d,Q)e^{-2tQ}
\end{align*}
where $\zeta(d,Q)$ is the number of integer solutions of the equation $x_1+\ldots+x_d=Q$ with $x_j\geq \Theta$ for all $j$. So
$$\zeta(d,Q)\leq \#\{\mbox{solutions~of}~x_1+\ldots+x_d=Q,~x_j\in\mathbb{N}_0\}=\binom{Q+d-1}{d-1}.$$
By Stirling Formula we have
\begin{align*}
\binom{Q+d-1}{d-1}&\leq\const\frac{(Q+d-1)^{Q+d-1}}{Q^Q(d-1)^{d-1}}\\
&\leq \Bigg(\const^{\frac 1Q}\Big(1+\frac{d-1}{Q}\Big)\Big(1+\frac{Q}{d-1}\Big)^{\frac{d-1}{Q}}\Bigg)^Q.
\end{align*}
Since $d\Theta\leq Q$, each factor in the last expression can be made arbitrarily close to $1$ by taking $\Theta$ large enough. Therefore
$$\binom{Q+d-1}{d-1}\leq e^{tQ},$$
and
$$\mathbb{E}\left(e^{tF_n}\right)\leq \sum\limits_{r,d,Q}\binom{r}{d}e^{tQ}e^{-2tQ}\leq\sum\limits_{r,d,Q}\binom{r}{d}e^{-tQ}\leq\sum\limits_{r,d}\binom{r}{d}.$$
Now
\begin{align*}
\sum\limits_{r,d}\binom{r}{d}&\leq\sum\limits_{d=1}^{\frac{\beta+\log 4}{s-1}\frac{n}{\Theta}}\sum\limits_{r=d}^{n}\binom{r}{d}\\
&\leq n\sum\limits_{d=1}^{\frac{\beta+\log 4}{s-1}\frac{n}{\Theta}}\binom{n}{d}\\
&\leq n\sum\limits_{d=1}^{\frac{\beta+\log 4}{s-1}\frac{n}{\Theta}}\binom{n}{\frac{\beta+\log 4}{s-1}\frac{n}{\Theta}}\\
&\leq \frac{\beta+\log 4}{s-1}\frac{n^2}{\Theta}\binom{n}{\frac{\beta+\log 4}{s-1}\frac{n}{\Theta}}.
\end{align*}
Using again the Stirling Formula, we have
$$\mathbb{E}\left(e^{tF_n}\right)\leq\const~\frac{n^2}{\Theta}e^{h(\Theta)n}$$
where $h(\theta)\to 0$ when $\Theta\to +\infty$.
\end{proof}
If we take $t=(1-\beta\frac{s+5}{\beta+\log\lambda_c})\frac{1}{3}$ and $\Theta$ large enough such that $2\tau_2=\frac{t\epsilon}{C_5}-h(\Theta)>0$
\begin{align*}
\left|\left\{x\in I: F_n(x)>\frac{n\epsilon}{C_5}\right\}\right|&\leq e^{-t\frac{n\epsilon}{C_5}}\mathbb{E}\left(e^{tF_n}\right),\quad\mbox{Chebyshev's inequality}\\
&\leq \const\frac{n^2}{\Theta}e^{-t\frac{n\epsilon}{C_5}}e^{h(\Theta)n}\\
&\leq \const\frac{n^2}{\Theta}e^{-2\tau_2 n}\\
&\leq e^{-\tau_2n}
\end{align*}
for big enough $n$, say $n\geq N_2$, such that $\const\frac{n^2}{\Theta}e^{-\tau_2n}<1$. Therefore, there exists $C=C(N_2,\tau_2)$ such that for all $n\in\mathbb{N}$
$$\Big|\{x\in I: \mathcal{R}(x)>n\}\Big|\leq Ce^{-\tau_2 n}.$$

\appendix 
\section{Statistical properties}\label{ap.statistical}\label{ap.a}

Here we introduce the precise formulations of the statistical notions used in Corollary~\ref{co.statistical}. Let $\mathcal H$  denote Banach space of H\"older continuous functions, for some fixed exponent $\gamma>0$. We consider, for some probability measure $\mu$,  the Banach space of essentially bounded functions $L^\infty(\mu)$.

\subsection{Decay of correlations}
\label{ss.A0} We define the \emph{correlation} of $\varphi\in \mathcal H$ and  \( \psi\in L^\infty(\mu) \)  as
\[
\cv_\mu (\varphi,\psi\circ f^n):=\left|\int \varphi\, (\psi\circ f^n)\, d\mu-\int  \varphi\, d\mu\int
\psi\, d\mu\right|.
\]
We say that we have \emph{exponential decay
of correlations} for H\"older observables  \emph{against}
observables in $L^\infty(\mu)$ if there are $C>0$ and $\tau>0$ such that for all $\varphi\in\mathcal H$, 
$\psi\in L^\infty(\mu)$ and $n\ge 1$
 $$\cv_\mu(\varphi,\psi\circ f^n)\le C\|\varphi\|\cdot\|\psi\|_\infty e^{-\tau n}.$$

\subsection{Large deviations}
\label{ss.A1}  Given  $\varphi\in\mathcal H$ and
 \( \epsilon>0 \)
we define the \emph{large deviation} of $\varphi$ at time~$n$
%$\int \varphi d\mu$
~as
\[
\ld_{\mu}(\varphi,\epsilon, n):=\mu\left(\left|\frac1 n \sum_{i=0}^{n-1}
\varphi\circ f^i-\int\varphi d\mu \right|>\epsilon\right).
\]
By Birkhoff's ergodic theorem  the quantity \( \ld_{\mu}(\varphi,\epsilon,n) \to 0 \), as
\( n\to \infty \). We say that we have \emph{exponential large deviateions} for H\"older observables   if there are $C=C(\epsilon)>0$ and $\tau>0$ such that for all $\varphi\in\mathcal H$ and $n\ge 1$
 $$\ld_{\mu}(\varphi,\epsilon, n)\le C\|\varphi\| e^{-\tau n}.$$

\subsection{Central Limit Theorem}
\label{ss.A1C}
Let $\varphi\in\mathcal H$ be such that $\int \varphi d\mu=0$. Then
\begin{equation}\label{eq.sigma}
\sigma^2=\lim_{n\to\infty}\frac1n \int \left(\sum_{i=0}^{n-1} \varphi\circ f^i\right)^2d\mu\geq0
\end{equation}
is well defined. We say the {\em Central Limit Theorem} holds for $\varphi$ if for all $a\in\mathbb R$
\[
\mu\left(\left\{x: \frac1{\sqrt n} \sum_{i=0}^{n-1} \varphi\circ f^i(x)\leq a\right\}\right)\rightarrow \int_{-\infty}^a \frac1{\sigma\sqrt{2\pi}}\text e^{-\frac{x^2}{2\sigma^2}}dx, \text{ as $n\to\infty$},
\]
whenever $\sigma^2>0$.
Additionally, $\sigma^2=0$ if and only if $\varphi$ is a \emph{coboundary}  (\( \varphi\neq \psi\circ f - \psi \) for any \(
\psi \in L^2(\mu)\)).

\subsection{Local Limit Theorem}
\label{ss.A2}
%\begin{definition}
\label{aperiodic} A function $\varphi:I\to\mathbb R$ is said to be \emph{periodic} if there exist $\rho\in\mathbb R$, a measurable 
function $\psi:I\to\mathbb R$, $\lambda>0$, and
$q:I\to\mathbb Z$, such that $$\varphi=\rho+\psi-\psi\circ f+\lambda q$$ almost everywhere. Otherwise, it is said to be \emph{aperiodic}.

Let
$\varphi \in \mathcal H$ be such that $\int \varphi d\mu=0$ and $\sigma^2$ be as in \eqref{eq.sigma}. Assume that  
$\varphi$ is aperiodic (which implies that $\sigma^2>0$). 
We say that the {\em Local Limit Theorem} holds for  $\varphi$ if for any bounded interval 
$J\subset \mathbb R$,  for any real sequence $\{k_n\}_{n\in\mathbb N}$ with 
$k_n/n \to \kappa \in\mathbb R$, for any $u\in\mathcal H$, for any measurable $v:I\to\mathbb R$ we have
\[
\sqrt n \mu\left(\left\{x\in M:\; \sum_{i=0}^{n-1} \varphi\circ f^i(x)\in J+k_n+u(x)+v(f^nx)\right\}\right)\to m(J) \frac{\mbox{e}^{-\frac{\kappa^2}{2\sigma^2}}}{\sigma\sqrt{2\pi}}.
\]

\subsection{Berry-Esseen Inequality}
\label{ss.A3}
If $f$ admits a Young tower of base $\Delta_0\subset I$ and return time function $R$, then for any 
$\varphi:I\to\mathbb R$ define $\varphi_{\Delta_0}:\Delta_0\to\mathbb R$ by
\[
\varphi_{\Delta_0}(x)=\sum_{i=0}^{ R(x)-1}\varphi (f^ix).
\]

Let $\varphi\in\mathcal H$ be such that $\int \varphi d\mu=0$ and $\sigma^2$ be as in \eqref{eq.sigma}. 
Assume that $\sigma^2>0$ and that there exists $0<\delta\leq 1$ such that 
$\int |\varphi_{\Delta_0}|^2\chi_{|\varphi_{\Delta_0}|> z}d\mu\le \const z^{-\delta}$, for large $z$. If $\delta=1$, 
assume also that  $\int |\varphi_{\Delta_0}|^3\chi_{|\varphi_{\Delta_0}|\leq z}d\mu$ is bounded. 
We say that {\em Berry-Esseen Inequality} holds for $\varphi$ if there exists $C>0$ such that for all 
$n\in\mathbb N$ and $a\in\mathbb R$ we have
\[
\left|\mu\left(\left\{x: \frac1{\sqrt n} \sum_{i=0}^{n-1} \varphi\circ f^i(x)\leq a\right\}\right) - \int_{-\infty}^a \frac1{\sigma\sqrt{2\pi}}\text e^{-\frac{x^2}{2\sigma^2}}dx\right|\leq \frac{C}{n^{\delta/2}}.
\]

\subsection{Almost Sure Invariance Principle}
\label{ss.A4}
%\begin{definition}
Given $d\ge 1$ and a H\"older continuous $\varphi\colon M\to \mathbb R^d$ with $\int \varphi d\mu =0$, we
denote $$S_n=\sum_{i=0}^{n-1}\varphi\circ f^i, \quad\text{for each $n\ge 1$.}$$
We say that $\varphi$ satisfies an \emph{Almost Sure Invariance Principle}
if there exists $\lambda > 0$ and  a probability
space supporting a sequence of random variables $\{S^*_n\}_n$ (which can be $\{S_n\}_n$ in the $d=1$ case)
and a $d$-dimensional Brownian motion
$W(t)$ such that
\begin{enumerate}
\item  $\{S_n\}_n $ and $ \{S^*_n\}_n $ are equally distributed;
\item  $S^*_n = W(n) + O(n^{1/2-\lambda})$, as $n\to\infty$,
almost everywhere.
\end{enumerate}

%*******************************************************************************************************************************************************

\end{document}